\begin{document}
\title{Lasso Guarantees for $ \beta $-Mixing Heavy Tailed Time Series}

\author{
  Kam Chung Wong\\
  Department of Statistics\\
  University of Michigan\\
  \texttt{kamwong@umich.edu}\\
  \and
  Zifan Li \\
  Departments of Statistics\\
Yale University\\
  \texttt{zifan.li@yale.edu}\\
\and
Ambuj Tewari  \\
Departments of Statistics and EECS        \\
University of Michigan\\
    \texttt{tewaria@umich.edu}\\
}

\maketitle

\begin{abstract}
Many theoretical results for the lasso require the samples to be iid. Recent work has provided guarantees for the lasso assuming that the time series is generated by a sparse Vector Auto-Regressive (VAR) model with Gaussian innovations. Proofs of these results rely critically on the fact that the true data generating mechanism (DGM) is a finite-order Gaussian VAR. This assumption is quite brittle: linear transformations, including selecting a subset of variables, can lead to the violation of this assumption. In order to break free from such assumptions, we derive non-asymptotic inequalities for estimation error and prediction error of the lasso estimate of the best linear predictor without assuming any special parametric form of the DGM. Instead, we rely only on (strict) stationarity and geometrically decaying $\beta$-mixing coefficients to establish error bounds for the lasso for subweibull random vectors. The class of subweibull random variables that we introduce includes subgaussian and subexponential random variables but also includes random variables with tails heavier than an exponential. We also show that, for Gaussian processes, the $\beta$-mixing condition can be relaxed to summability of the $\alpha$-mixing coefficients. Our work provides an alternative proof of the consistency of the lasso for sparse Gaussian VAR models. But the applicability of our results extends to non-Gaussian and non-linear times series models as the examples we provide demonstrate.
\end{abstract}


%

\section{Introduction}\label{sec:intro}
%
High dimensional statistics is a vibrant area of research in modern statistics and machine learning~\citep{buhlmann2011statistics,hastie2015statistical}. The interplay between computational and statistical aspects of estimation in high dimensions has led to a variety of efficient algorithms with statistical guarantees including methods based on convex relaxation (see, e.g.,~\cite{chandrasekaran2012convex,negahban2012unified}) and methods using iterative optimization
techniques (see, e.g.,~\cite{beck2009fast,agarwal2012fast,donoho2009message}). However, the bulk of existing theoretical work focuses on iid samples. The extension of theory and algorithms in high dimensional statistics to time series data, where dependence is the norm rather than the exception, is just beginning to occur. We briefly summarize some recent work in Section~\ref{sec:worksummary} below.

Our focus in this paper is to give guarantees for $\ell_1$-regularized least squares estimation, or lasso~\citep{hastie2015statistical}, that hold even when there is temporal dependence in data. The recent work of \cite{basu2015regularized} took a major step forward in providing guarantees for lasso in the time series setting. They considered Gaussian Vector Auto-Regressive (VAR) models with finite lag (see Example~\ref{ex:GaussVAR}) and defined a measure of stability using the spectral density, which is the Fourier transform of the autocovariance function of the time series.
Then they showed that one can derive error bounds for lasso in terms of their measure of stability. Their bounds are an improvement over previous work~\citep{negahban2011estimation,loh2012high,han2013transition} that assumed operator norm bounds
on the transition matrix. These operator norm conditions are restrictive even for VAR models with a lag of $1$ and never hold (Please see pp. 11--13 in the Supplement of \cite{basu2015regularized} for details) if the lag is strictly larger than 1! Therefore, the results of \cite{basu2015regularized} hold in greater generality than previous work. But they do have limitations.

A key limitation is that \cite{basu2015regularized} assume that the VAR model is the true data generating mechanism (DGM). Their proof techniques rely heavily on having the
VAR representation of the stationary process available. The VAR model assumption, while popular in many areas, can be restrictive since the VAR family is not closed under linear transformations: if $Z_t$ is a VAR process and $C$ is a linear transformation then
$C Z_t$ may not be expressible as a finite lag VAR~\citep{lutkepohl2005new}. We later provides examples (Examples~\ref{ex:misVAR} and~\ref{ex:sGmisVAR}) of VAR processes where leaving out a single variable breaks down the VAR assumption. What if we do not assume that $Z_t$ is a finite lag VAR process but simply that it is stationary? Under stationarity (and finite 2nd moment conditions),
the best linear predictor of $Z_t$ in terms of $Z_{t-d},\ldots, Z_{t-1}$ is well defined even if $Z_t$ is not a lag $d$ VAR. If we assume that this best linear predictor involves sparse coefficient matrices, can we still guarantee consistent parameter estimation? Our paper provides an affirmative answer to this important question.

We provide finite sample parameter estimation and prediction error bounds for lasso in two cases: (a) for stationary Gaussian processes with suitably decaying $\alpha$-mixing coefficients (Section~\ref{sect:gaus}), and (b) for stationary processes with subweibull marginals and  geometrically decaying $\beta$-mixing coefficients (Section~\ref{section:subweibull}). It is well known that guarantees for lasso follow if one can establish restricted eigenvalue (RE) conditions and provide deviation bounds (DB) for the correlation of noise with the regressors (see the Master Theorem in Section~\ref{sect:masterThm} below for a precise statement). Therefore, the bulk of the technical work in this paper boils down to establishing, with high probability, that DB and RE conditions hold under the Gaussian $\alpha$-mixing ( Propositions~\ref{result:rhoDB} and~\ref{result:rhoRE}) and the subweibull $\beta$-mixing assumptions respectively (Propositions~\ref{Lemma:SubweibullDevBound} and~\ref{Lemma:SubweibullRE}). Note that RE conditions were previously shown to hold under the \emph{iid assumption} by \cite{raskutti2010restricted} for Gaussian random vectors and by \cite{rudelson2013reconstruction} for subgaussian random vectors. \edit{We also include some simulations (Section~\ref{sect:sim}) to study the effect of VAR dimension, tail behavior, and temporal dependence on the estimation error decay rate as a function of the sample size.}

\subsection{Summary of Recent Work on High Dimensional Time Series}\label{sec:worksummary}
While we discussed the work of~\cite{basu2015regularized} -- since ours is closely related to theirs --
we wish to emphasize that several other researchers have recently published work on statistical analysis of high dimensional time series.
\cite{song2011large}, \cite{wu2015high} and \cite{alquier2011sparsity} give theoretical guarantees assuming that RE conditions hold. As~\cite{basu2015regularized} pointed out, it takes a fair bit of work to actually establish RE conditions in the presence of dependence. \cite{chudik2011infinite,chudik2013econometric,chudik2014theory} use high dimensional time series for global
macroeconomic modeling. Alternatives to lasso that have been explored include quantile based methods for heavy-tailed data~\citep{qiu2015robust}, quasi-likelihood approaches~\citep{uematsu2015penalized}, two-stage estimation techniques~\citep{davis2016sparse} and the Dantzig selector~\citep{han2013transition, han2015direct}.  Both \cite{han2013transition} and \cite{han2015direct} studied the stable  Gaussian VAR models while our paper covers wider classes of processes as our examples demonstrate.
{ \cite{fan2016penalized} considered the case of multiple sequences of univariate $ \alpha $-mixing heavy-tailed dependent data. Under a stringent condition on the auto-covariance structure (please refer to Appendix~\ref{proof:subweill} for details), the paper established finite sample $ \ell_2 $ consistency in the real support for penalized least squares estimators. In addition, under mutual incoherence type assumption,
it provided sign and $ \ell_\infty $ consistency. An AR(1) example was given as an illustration.} 
Both~\cite{uematsu2015penalized} as well as~\cite{kock2015oracle} establish oracle inequalities for the lasso applied to time series prediction. \cite{uematsu2015penalized} provided results not just for lasso but also for estimators using penalties such as the SCAD penalty. Also, instead of assuming Gaussian errors, it is only assumed that fourth moments of the errors exist. \cite{kock2015oracle} provided non-asymptotic lasso error and prediction error bounds for stable Gaussian VARs. Both \cite{sivakumar2015beyond} and \cite{medeiros2016} considered subexponential designs. \cite{sivakumar2015beyond} studied lasso on iid subexponential designs and provide finite sample bounds. \cite{medeiros2016} studied adaptive lasso for linear time series models and provide sign consistency results.
\cite{wang2007regression} provided theoretical guarantees for lasso in linear regression models with autoregressive errors.
Other structured penalties beyond the $\ell_1$ penalty have also been considered \citep{nicholson2014hierarchical,nicholson2015varx,guo2015high,ngueyep2014large}.
\cite{zhang2015gaussian},~\cite{mcmurry2015high},~\cite{wang2013sparse}
and~\cite{chen2013covariance} consider estimation of the covariance (or precision) matrix of high dimensional time series.
\cite{mcmurry2015high} and~\cite{nardi2011autoregressive} both highlight that autoregressive (AR) estimation, even in univariate time series, leads to high dimensional parameter estimation problems if the lag is allowed to be unbounded.

\subsection{Organization of the Paper}
Section~\ref{section:prelim} introduces our notation, presents the assumptions used to derive our key results, and states some useful facts needed later. Then we present two sets of high probability guarantees for the lower restricted eigenvalue and deviation bound conditions in Sections~\ref{sect:gaus} and \ref{section:subweibull} respectively. 
Section~\ref{sect:gaus} covers $ \alpha $-mixing Gaussian time series. Note that $ \alpha $-mixing is a weaker notion than $ \beta$-mixing and all the parameter dependences are explicit. It is followed by Section~\ref{section:subweibull} which covers $ \beta$-mixing time series with subweibull observations and we make the dependence on the subweibull norm explicit. Section~\ref{sect:sim} presents two simulation results: one where we vary the heaviness of the tail of the random vectors in the time series and another one where we vary the degree of temporal dependence in the time series.

We present five examples, two involving $\alpha$-mixing Gaussian processes and three $ \beta$-mixing subweibull vectors. They are presented along with the corresponding theoretical results to illustrate applicability of the theory. 
Examples~\ref{ex:GaussVAR} and~\ref{ex:misVAR} concern applications of the results in Section~\ref{sect:gaus}. We consider VAR models with Gaussian innovations when the model is correctly or incorrectly specified.
In Examples~\ref{examp:subWeilVAR},~\ref{ex:sGmisVAR}, and~\ref{ex:ARCH}, we focus on
the case of subweibull random vectors. We consider VAR models with subweibull innovations when the model is correctly or incorrectly specified (Examples~\ref{examp:subWeilVAR} and ~\ref{ex:sGmisVAR}). In addition, we go beyond linear models and introduce non-linearity in the DGM in Example~\ref{ex:ARCH}.

These examples serve to illustrate that our theoretical results for lasso on high dimensional dependent data estimation extend beyond the classical linear Gaussian setting and provides guarantees potentially in the presence of one or more of the following scenarios: model mis-specification, heavy tailed non-Gaussian innovations and nonlinearity in the DGM.

\section{Preliminaries}\label{section:prelim}

Consider a stochastic process of pairs $ (X_t, Y_t)_{t=1}^\infty $ where
$ X_t\in \R^p ,\, Y_t\in \R^q,\, \forall t $. One might be interested in predicting $ Y_t$ given $ X_t $. In particular, given a dependent sequence  $(Z_t)_{t=1}^T$, one might want to forecast the present $Z_t$ using the past $ (Z_{t-d},\ldots,Z_{t-1})$. A linear predictor is a natural choice. To put it in the regression setting, we identify  $Y_t = Z_t$ and $X_t = (Z_{t-d},\ldots,Z_{t-1})$. The pairs $(X_t, Y_t)$ defined as such are no longer iid. Assuming strict stationarity, the parameter matrix of interest $\bstar \in \R^{p \times q} $ is
\begin{equation} \label{eqn:bstar}
\bstar = \argmin_{\bb \in \R^{p \times q}} \E [ \vertii{ Y_t - \bb' X_t}_2^2  ] .
\end{equation}
Note that $\bstar$ is independent of $t$ owing to stationarity. Because of high dimensionality ($ pq \gg T $), consistent estimation is impossible without regularization. We consider the lasso procedure. The  $\ell_1$-penalized least squares estimator $\bhat \in \R^{p \times q} $ is defined as
\begin{equation} \label{eqn:bhat}
\bhat = \argmin_{\bb \in \R^{p \times q}} \frac{1}{T}\Vert \vect( \mt{Y}-\mt{X}\bb ) \Vert_2^2+ \lambda_T \vertii{\vect(\bb)}_1  .
\end{equation}
where
\begin{align}\label{eq:XYdef}
\mt{Y} &=(Y_1,Y_2,\,\ldots\,, Y_T)' \in \R ^{T \times q} 
&
\mt{X} &=(X_1,X_2,\,\ldots\,, X_T)' \in \R ^{T \times p} .
\end{align}
The following matrix of true residuals is not available to an estimator but will appear in our analysis:
\begin{align}\label{eq:Wdef}
\mt{W} &:= \mt{Y} - \mt{X} \bstar.
\end{align}


\subsection{Notation}
For scalars $ a $ and $ b $, define shorthands  $a \wedge b:=\min\{a,b\}$ and $a \vee b :=\max\{a,b\}$. For a symmetric matrix $ \mt{M} $, let $\lmax{\mt{M}}$ and $\lmin{\mt{M}}$ denote its maximum and minimum eigenvalues respectively.
\edit{ 
For any square matrix $ \mt{M}  $ with rank $ d $, let $ \lambda_i{\mt(M)},\, i = 1, \dots, d $ denote its eigenvalues.  Then, $\sr{\mt{M}}$ denotes its spectral radius $\max_i{\{|\lambda_i(\mt{M})|\}}$.
}
For any matrix $\mt{M}$, let  $\vertiii{\mt{M}}$, $\vertiii{\mt{M}}_\infty$, and $\vertiii{\mt{M}}_F$ denote its operator norm $\sqrt{\lambda_{\max}(\mt{M}'\mt{M})}$, entry-wise $\ell_\infty$ norm $\max_{i,j} |\mt{M}_{i,j}|$, and Frobenius norm $\sqrt{\mathrm{tr}(\mt{M}'\mt{M})}$ respectively. For any vector $v\in \R^p$, $\vertii{v}_q$ denotes its $\ell_q$ norm $ (\sum_{i=1}^p |v_i|^q)^{1/q}$. 
Unless otherwise specified, we shall use $\vertii{\cdot}$ to denote the $\ell_2$ norm. For any vector $\vc{v} \in \R^p $, we use $\vertii{\vc{v}}_0$ and  $\vertii{\vc{v}}_{\infty}$ to denote $\sum_{i=1}^{p} \mathbbm{1} \{\vc{v}_i\neq 0\}$ and $\max_i\{|\vc{v}_i|\}$ respectively. Similarly, for any matrix $ \mt{M}$, $ \vertiii{\mt{M}}_{0}= \vertii{\vect(\mt{M})}_0$ where $\vect(\mt{M})$ is the vector obtained from $\mt{M}$ by concatenating the rows of $M$. We say that matrix $ \mt{M} $ (resp. vector $ \vc{v} $) is \textit{$ s $-sparse} if $ \vertiii{\mt{M}}_0=s$ (resp. $ \vertii{ \vc{v}}_0 =s $). We use $ \vc{v}' $ and $ \mt{M}' $ to denote the transposes of  $ \vc{v} $ and $ \mt{M} $ respectively. 
When we index a matrix, we adopt the following conventions. For any matrix $ \mt{M}\in \R^{p\times q} $, for $ 1\le i \le p$, $1\le j\le q $, we define  $  \mt{M}[i,j]\equiv\mt{M}_{ij}:=\vc{e}_i'\mt{M}\vc{e}_j $, $\mt{M}[i,:]\equiv\mt{M}_{i:}:=\vc{e}_i'\mt{M} $ and $ \mt{M}[:,j]\equiv\mt{M}_{:j}:=\mt{M}\vc{e}_j $ where $ \vc{e}_i  $  is the vector with all $ 0 $s except for a $ 1 $ in the $ i $th coordinate.
The set of integers is denoted by $\mathbb{Z}$. \edit{
For simplicity, $ \Sigma $ and $ \Gamma $ are not in \textbf{bold} font in this paper. 
}\\

For a lag $ l \in \mathbb{Z}$, we define the auto-covariance matrix w.r.t. $ (X_t, Y_t)_t $ as $\Sigma(l) = \Sigma_{({X;Y})}(l):=\E [(X_t;Y_t)(X_{t+l};Y_{t+l})'] $. Note that $\Sigma(-l) = \Sigma(l)'$. Similarly, the auto-covariance matrix of lag $ l $ w.r.t. $(X_t)_t$ is $ \Sigma_{{X}}(l):=\E[ X_tX_{t+l}']$, and w.r.t. $(Y_t)_t$ is $ \Sigma_{Y}(l):=\E [Y_t Y_{t+l}' ]$. At lag $ 0 $, we often simplify the notation as $ \Sigma_X \equiv \Sigma_X(0) $ and $ \Sigma_Y \equiv \Sigma_Y(0) $. 

The cross-covariance matrix at lag $l$ is $ \Sigma_{X,Y}(l):=\E[ X_t Y_{t+l}' ]$. Note the difference between $\Sigma_{(X;Y)}(l)$ and $\Sigma_{X,Y}(l)$: the former is a $(p+q) \times (p+q)$ matrix, the latter is a $p \times q$ matrix. 
\edit{
Thus, $ \Sigma_{(X;Y)}(l)$ is a matrix consisting of four sub-matrices with the following block structure:
\begin{equation*}
\Sigma_{(X;Y)}(l) = \begin{bmatrix}
\Sigma_{X}(l) & \Sigma_{X,Y}(l) \\
\Sigma_{Y,X}(l) & \Sigma_{Y}(l)  
\end{bmatrix}.
\end{equation*}
}

\edit{
Let $\mathbf{1}$ and $ \mathbf{0}$ denote vectors consisting of ones and zeros respectively with dimensionality indicated in a subscript (if it is not clear from the context).}
We adopt the convention that, at lag $ 0 $, we omit the lag argument $ l $. For example, $ \Sigma_{X,Y}$ denotes $\Sigma_{X,Y}(0) = \E[ X_t Y_t' ]$. Finally, let $ \hat{\Gamma}:= \frac{\mt{X}'\mt{X}}{T} $ be the empirical covariance matrix.

%
%
%
%
%

\subsection{Sparsity, Stationarity and Zero Mean Assumptions}

The following assumptions are maintained throughout; we will make additional assumptions specific to each of the subweibull and Gaussian scenarios. 
Our goal is to provide finite sample bounds on the error $\bhat - \bstar$. We shall present theoretical guarantees on the $\ell_2$ parameter estimation error  $\| \vect(\bhat -\bstar) \|_2$ and also
the associated (in-sample) prediction error $\vertiii{ (\bhat-\bstar)' \hat{\Gamma} (\bhat-\bstar)  }_F$. 

\begin{assump}\label{as:sparse}
The matrix $\bstar$ is $s$-sparse; i.e., $\vertii{\vect(\bstar)}_0 = s$. \label{as:spars}
\end{assump}
\begin{assump} \label{as:stat}
The process $(X_t, Y_t)$ is strictly stationary; i.e., $ \forall m, \tau,\, n \ge 0$,
\[
((X_{m},Y_{m}),\cdots ,(X_{m+n},Y_{m+n}))~\overset{d}{=} ~((X_{m+\tau},Y_{m+\tau}),\cdots,(X_{m+n+\tau},Y_{m+n+\tau})) .
\] where ``$\overset{d}{=}$'' denotes equality in distribution. \label{as:stat}
\end{assump}
\begin{assump} \label{as:0mean}
The process $(X_t, Y_t)$ is centered; i.e., $\forall t,\ \E(X_t)=\mathbf{0}_{p \times 1}, $
and
$\E(Y_t)=\mathbf{0}_{q \times 1}$ .
\end{assump}

\subsection{A Master Theorem}\label{sect:masterThm}
We shall start with what we call a ``master theorem" that provides non-asymptotic guarantees for lasso estimation and prediction errors under two well-known conditions, viz., the restricted eigenvalue (RE) and the deviation bound (DB)
conditions.
Note that in the classical linear model setting (see, e.g.,~\cite[Ch 2.3]{hayashi2000econometrics}) where sample size is larger than the dimensionality
, the conditions for consistency of the ordinary least squares(OLS) estimator are as follows:
(a) the empirical covariance matrix $\mt{X}'\mt{X}/T \overset{P}{\rightarrow}\mt{Q}$ and $ \mt{Q} $ invertible; i.e., $\lmin{\mt{Q}}>0$, and (b)
the regressors and the noise are asymptotically uncorrelated; i.e., $\mt{X}'\mt{W} /T\rightarrow \mt{0}$.

In high-dimensional regimes,~\cite{bickel2009simultaneous},~\cite{loh2012high} and~\cite{negahban2012restricted} have established similar consistency conditions for lasso. The first one is the \textit{restricted eigenvalue} ({RE}) condition on $\mt{X}'\mt{X}/T$ (which is a special case, when the loss function is the squared loss, of the \textit{restricted strong convexity} ({RSC}) condition). The second is the \textit{deviation bound} ({DB}) condition on $\mt{X}'\mt{W}/T$.
The following lower    {RE} and    {DB} definitions are modified from those
given by \cite{loh2012high}. 

\begin{defn}[Lower Restricted Eigenvalue]\label{defn:RE}
A symmetric matrix ${\Gamma}\in \R^{p\times p} $ satisfies a lower restricted eigenvalue condition with curvature 
$\edit{ \alphac } >0$  and tolerance $\tau(T,p)>0$ if,
$$
\forall \vc{v} \in \R^{p},\ \vc{v}' {\Gamma}\vc{v} \ge \alphac \vertii{\vc{v}}^2_2 - \tau(T,p)\vertii{\vc{v}}^2_1 .
$$
\end{defn}

\begin{defn}[Deviation Bound]\label{defn:DB}
Consider the random matrices $\mt{X} \in \R^{T\times p}$ and $\mt{W}\in \R^{T\times q}$ defined in~\eqref{eq:XYdef} and~\eqref{eq:Wdef} above. They are said to satisfy the deviation bound condition if there exist a deterministic multiplier function $ \mathbb{Q}(\mt{X},\mt{W},\bstar)$ and a rate of decay function $\mathbb{R}(p,q,T)$ such that,
$$
\frac{1}{T}\vertiii{\mt{X}'\mt{W}}_{\infty} \le \mathbb{Q}(\mt{X},\mt{W},\bstar) \mathbb{R}(p,q,T) .
$$
\end{defn}


We now present a master theorem that provides guarantees for the $\ell_2$ parameter estimation error and the (in-sample) prediction error. The proof, given in Appendix~\ref{sec:masterproof}, builds on existing result of the same kind~\citep{bickel2009simultaneous,loh2012high,negahban2012restricted} and we make no claims of originality for either the result or the proof. 

\begin{thm}[Estimation and Prediction Errors] \label{result:master}
Consider the lasso estimator $\bhat$ defined in \eqref{eqn:bhat}. Suppose Assumption~\ref{as:spars} holds. Further, suppose that $\hat{\Gamma}  :=\mt{X}'\mt{X}/T$ satisfies the lower RE$(\alphac, \tau)$ condition with $\alphac \ge 32s\tau$ and  $\mt{X}'\mt{W}$ satisfies the deviation bound. Then, for any  $\lambda_T\ge 4 \mathbb{Q}(\mt{X},\mt{W},\bstar)\mathbb{R}(p,q,T) $, we have the following guarantees:
\begin{eqnarray}
\vertii{\vect(\bhat-\bstar)} \le 4\sqrt{s}\lambda_T/\alphac ,			\label{eq:l2errorBdd}
\\
\vertiii{ (\bhat-\bstar)' \hat{\Gamma} (\bhat-\bstar)  }_F^2
\le
\frac{32\lambda_T^2 s}{\alphac}			.							\label{eq:predErrBdd}
\end{eqnarray}
\end{thm}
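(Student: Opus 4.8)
\emph{Proof plan.} The argument is the by-now-standard lasso analysis (as the authors note, no originality is claimed), adapted to the matrix-response objective \eqref{eqn:bhat}. Throughout write $\Delta := \bhat - \bstar$, let $\langle \mt{A},\mt{B}\rangle := \mathrm{tr}(\mt{A}'\mt{B})$ denote the trace inner product (so $\vertii{\vect(\mt{A})}^2 = \langle \mt{A},\mt{A}\rangle = \vertiii{\mt{A}}_F^2$), let $S$ be the support of $\bstar$ seen as a subset of the $pq$ entries (so $|S| = s$ by Assumption~\ref{as:spars}), and for any matrix $\mt{M}$ let $\mt{M}_S$ be the matrix agreeing with $\mt{M}$ on $S$ and vanishing off $S$. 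Note that $\hat{\Gamma} = \mt{X}'\mt{X}/T$ is positive semidefinite, so $\langle \Delta, \hat{\Gamma}\Delta\rangle = \tfrac1T\vertiii{\mt{X}\Delta}_F^2 \ge 0$. The plan has four steps: (i) a basic inequality from optimality of $\bhat$; (ii) Hölder's inequality together with the deviation bound, and the triangle inequality together with sparsity of $\bstar$, to pass to a cone condition on $\Delta$; (iii) the lower RE condition to turn this into \eqref{eq:l2errorBdd}; (iv) feed \eqref{eq:l2errorBdd} back to obtain \eqref{eq:predErrBdd}.

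Since $\bhat$ minimizes the penalized objective and $\bstar$ is feasible, and $\mt{Y} - \mt{X}\bhat = \mt{W} - \mt{X}\Delta$, expanding $\vertiii{\mt{W}-\mt{X}\Delta}_F^2$ and cancelling $\tfrac1T\vertiii{\mt{W}}_F^2$ gives the basic inequality
\[
\langle \Delta, \hat{\Gamma}\Delta\rangle \;\le\; \tfrac{2}{T}\,\langle \mt{X}'\mt{W},\, \Delta\rangle \;+\; \lambda_T\big(\vertii{\vect(\bstar)}_1 - \vertii{\vect(\bhat)}_1\big).
\]
For the first term, Hölder's inequality (entrywise $\ell_\infty$ versus $\ell_1$) and the deviation bound give $\tfrac{2}{T}\langle \mt{X}'\mt{W},\Delta\rangle \le \tfrac{2}{T}\vertiii{\mt{X}'\mt{W}}_\infty \vertii{\vect(\Delta)}_1 \le 2\,\mathbb{Q}(\mt{X},\mt{W},\bstar)\,\mathbb{R}(p,q,T)\,\vertii{\vect(\Delta)}_1 \le \tfrac{\lambda_T}{2}\vertii{\vect(\Delta)}_1$, the last step because $\lambda_T \ge 4\,\mathbb{Q}\mathbb{R}$. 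For the second term, since $\bstar$ is supported on $S$, the triangle inequality yields $\vertii{\vect(\bstar)}_1 - \vertii{\vect(\bhat)}_1 \le \vertii{\vect(\Delta_S)}_1 - \vertii{\vect(\Delta_{S^c})}_1$. Substituting both bounds and using $\langle\Delta,\hat{\Gamma}\Delta\rangle \ge 0$ gives, on one hand, the cone condition $\vertii{\vect(\Delta_{S^c})}_1 \le 3\,\vertii{\vect(\Delta_S)}_1$, hence $\vertii{\vect(\Delta)}_1 \le 4\,\vertii{\vect(\Delta_S)}_1 \le 4\sqrt{s}\,\vertii{\vect(\Delta)}$; and, on the other hand, after dropping the nonpositive term, the one-sided estimate $\langle\Delta,\hat{\Gamma}\Delta\rangle \le \tfrac{3\lambda_T}{2}\vertii{\vect(\Delta_S)}_1 \le \tfrac{3\lambda_T}{2}\sqrt{s}\,\vertii{\vect(\Delta)}$.

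The RE condition is stated for vectors and $\hat{\Gamma}\in\R^{p\times p}$, so I apply it to each column $\Delta[:,j]$ and sum over $j=1,\dots,q$; using $\sum_j \vertii{\Delta[:,j]}_1^2 \le \big(\sum_j \vertii{\Delta[:,j]}_1\big)^2 = \vertii{\vect(\Delta)}_1^2$ this yields $\langle\Delta,\hat{\Gamma}\Delta\rangle \ge \alphac\,\vertii{\vect(\Delta)}^2 - \tau\,\vertii{\vect(\Delta)}_1^2$. Plugging in the cone bound $\vertii{\vect(\Delta)}_1^2 \le 16 s\,\vertii{\vect(\Delta)}^2$ and using $\alphac \ge 32 s\tau$ gives $\langle\Delta,\hat{\Gamma}\Delta\rangle \ge (\alphac - 16 s\tau)\vertii{\vect(\Delta)}^2 \ge \tfrac{\alphac}{2}\vertii{\vect(\Delta)}^2$. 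Combining with the one-sided estimate from the previous step and cancelling one factor of $\vertii{\vect(\Delta)}$ gives $\vertii{\vect(\Delta)} \le 3\sqrt{s}\,\lambda_T/\alphac \le 4\sqrt{s}\,\lambda_T/\alphac$, which is \eqref{eq:l2errorBdd}.

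Finally, substituting \eqref{eq:l2errorBdd} into the one-sided estimate gives $\tfrac1T\vertiii{\mt{X}\Delta}_F^2 = \langle\Delta,\hat{\Gamma}\Delta\rangle \le \tfrac{3\lambda_T}{2}\sqrt{s}\cdot\tfrac{4\sqrt{s}\lambda_T}{\alphac} \le \tfrac{32\lambda_T^2 s}{\alphac}$ (with room to spare in the constant); since $\hat{\Gamma}\succeq 0$, the $q\times q$ matrix $\Delta'\hat{\Gamma}\Delta$ is positive semidefinite, so its Frobenius norm is at most its trace, $\vertiii{\Delta'\hat{\Gamma}\Delta}_F \le \mathrm{tr}(\Delta'\hat{\Gamma}\Delta) = \langle\Delta,\hat{\Gamma}\Delta\rangle$, and \eqref{eq:predErrBdd} follows. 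I do not expect a genuine obstacle: the result is classical and the authors disclaim originality. The only two points that require care are (a) lifting the vector-form RE condition to the matrix-response setting through the column-wise application above and the inequality $\sum_j \vertii{\Delta[:,j]}_1^2 \le \vertii{\vect(\Delta)}_1^2$, and (b) the constant bookkeeping, which is calibrated so that the hypotheses $\alphac \ge 32 s\tau$ and $\lambda_T \ge 4\,\mathbb{Q}\mathbb{R}$ are exactly what is needed for the factor-of-$\tfrac12$ slack invoked at each step.
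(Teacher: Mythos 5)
Your proof is correct and follows essentially the same route as the paper's own argument: the basic inequality from optimality of $\bhat$, H\"older's inequality combined with the deviation bound, the cone condition $\|\vect(\Delta_{S^c})\|_1\le 3\|\vect(\Delta_S)\|_1$ obtained from nonnegativity of the quadratic form, and finally the lower RE condition with $\alphac\ge 32s\tau$. You are in fact a little more careful than the paper on two points it glosses over --- the column-wise lifting of the vector-form RE condition to the matrix error $\Delta\in\R^{p\times q}$ via $\sum_j\|\Delta[:,j]\|_1^2\le\|\vect(\Delta)\|_1^2$, and the constant bookkeeping in the last step (the paper's ``$\le 2\sqrt{s}\lambda_T\|\hat{\Delta}\|_2$'' should read $8\sqrt{s}\lambda_T\|\vect(\hat{\Delta})\|_2$, whereas your tighter one-sided estimate $\langle\Delta,\hat{\Gamma}\Delta\rangle\le\tfrac{3}{2}\lambda_T\sqrt{s}\,\|\vect(\Delta)\|_2$ delivers the stated constants with room to spare).
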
 

With this master theorem at our disposal, we just need to establish the validity of the restricted eigenvalue ({RE}) and deviation bound ({DB}) conditions for stationary time series  by making appropriate assumptions.
We shall do that \emph{without} assuming any parametric form of the data generating mechanism. Instead, we will impose appropriate tail conditions on the random vectors $X_t,Y_t$ and also assume that they satisfy some type of mixing condition. Specifically, in Section~\ref{sect:gaus}, we consider $\alpha$-mixing Gaussian random vectors.
 Next, in Section~\ref{section:subweibull}, we consider $\beta$-mixing subweibull random vectors (we define subweibull random vectors below in Section~\ref{section:subweibulldef}). Historically, mixing conditions were introduced
to generalize the classic limit theorems in probability beyond the case of iid random variables~\citep{rosenblatt1956central}. Recent work on high dimensional statistics has established the validity of {RE} conditions in the iid Gaussian~\citep{raskutti2010restricted} and iid subgaussian cases~\citep{rudelson2013reconstruction}. One of the main contributions of our work is to extend these results in high dimensional statistics from the iid to the mixing case.

\edit{
\subsection{Proof Strategies for the RE and DB bounds}

The key ingredients in establishing both the DB and RE conditions are concentration inequalities. The general strategy is to discretize the vector space, apply the concentration inequality, and use the union bound. This occurs in the proofs establishing the DB and RE conditions in both cases: $ \alpha $-mixing Gaussian and $ \beta $-mixing subweibull. 

A brief sketch of the proof of the DB condition via concentration goes like this: 
Consider a fixed vector $v \in \R^p$ and let $\Sigma_X = \E[ X_t X_t^T ]$. 
Use concentration inequality to show that
$$
v' \mt{X}' \mt{X} v / T - v' \Sigma_X v = \sum  (1/T) \sum_{t=1}^T \left( \| X_t'v \|_2^2 - \E[ \| X_t'v \|_2^2 ] \right)
$$
is sufficiently small.
Then apply the union bound over a set of sparse $v$.

The arguments to show the RE condition via concentration proceed as follows. Note that
$$
\vertiii{\mt{X}'\mt{W} }_{\infty}= \max_{1\le i \le p,1\le j \le q } | [\mt{X}'\mt{W}]_{i,j} |= \max_{1\le i \le p,1\le j \le q }\verti{ (\mt{X}_{:i})'\mt{W}_{:j}} .
$$
At the population level, there is no correlation between $\mt{W}$ and $\mt{X}$. Therefore,
$$
\mathbb{E} (\mt{X}_{:i})'(\mt{Y}-\mt{X}\bstar)=\vc{0},\forall i \;
\Rightarrow 
\mathbb{E} ({\mt{X}_{:i}})'\mt{W}_{:j}=0,\forall i,j .
$$
Fix $i,j$ and write,
\begin{align*}
\verti{(\mt{X}_{:i})'\mt{W}_{:j}} &= \verti{(\mt{X}_{:i})'\mt{W}_{:j} - \E[ (\mt{X}_{:i})'\mt{W}_{:j} ]}  \\
&\le \half \verti{  \Vert \mt{X}_{:i}+ \mt{W}_{:j} \Vert^2 - \E[\Vert \mt{X}_{:i}+ \mt{W}_{:j} \Vert^2 ]  } \\
&\quad + \half \verti{  \Vert \mt{X}_{:i} \Vert^2 - \E[\Vert \mt{X}_{:i} \Vert^2 ] } + \half \verti{  \Vert \mt{W}_{:j} \Vert^2 -\E[ \Vert \mt{W}_{:j} \Vert^2]  } .
\end{align*}

The Hanson-Wright inequality (Lemma~\ref{thm:hanson}) takes care of the Gaussian process case. For the independent subgaussian case, the classical Bernstein's concentration inequality will allow us to prove lasso guarantees. \editt{However, applying the Bernstein's inequality requires the random vectors to satisfy \emph{independence} and \emph{subexponential} tail assumptions. Since a random variable is subgaussian if and only if its square is subexponential, the set of conditions required for the original stochastic process translate into \emph{independence} and \emph{subgaussian}.}

\editt{Often times, real time series data exhibits large tail behavior in addition to being dependent. Therefore, the analysis of lasso for real life time series data requires the arguments to deal with the two complications.} As a result, we need ways to quantify dependence and heavy tailed behavior. Then we need concentration inequalities that hold under weaker conditions. Next, we quantify \emph{dependence} using \emph{mixing coefficients}. Also, we quantify \emph{tail behavior} using the notion of \emph{subweibull} random variables. The concentration inequality we use here is Lemma~\ref{lemma:convenient form} which we derive in Appendix~\ref{sec:betamixingsubweibull}) building on the work of \cite{merlevede2011bernstein}.

}
\subsection{A Brief Overview of Mixing Conditions}\label{sec:mixingintro}

Mixing conditions~\citep{bradley2005basic} are well established in the stochastic processes literature as a way to allow for dependence in extending results from the iid case. The general idea is to first define a measure of dependence
between two random variables $X,Y$ (that can be vector-valued or even take values in a Banach space) with associated sigma algebras $\sigma(X), \sigma(Y)$. For example, 
\[
\alpha(X,Y) = \sup \{ |P(A \cap B) - P(A) P(B)| \::\: A \subset \sigma(X), B \subset \sigma(Y) \} .
\]
Then for a stationary stochastic process $(X_t)_{t=-\infty}^{\infty}$, one defines the mixing coefficients, for $l \ge 1$,
\[
\alpha(l) = \alpha(X_{-\infty:t}, X_{t+l:\infty}) .
\]
We say that a process is mixing, in the sense just defined, when $\alpha(l) \to 0$ as $l \to \infty$. The particular notion we get using the $\alpha$ measure of dependence above is called ``$\alpha$-mixing".
It was first used by~\cite{rosenblatt1956central} to extend the central limit theorem to dependent random variables. There are other, stronger notions of mixing, such as $\rho$-mixing and $\beta$-mixing that are defined using the
dependence measures:
\begin{align*}
\rho(X,Y) &= \sup \{ \mathrm{Cov}(f(X), g(Y)) \::\: \E{f} = \E{g} = 0, \E{f^2} = \E{g^2} = 1 \} \\
\beta(X,Y) &= \sup \frac{1}{2} \sum_{i=1}^I \sum_{j=1}^J | P(S_i \cap T_j) - P(S_i)P(T_j) | 
\end{align*}
where the last supremum is over all pairs of partitions $\{A_1,\ldots,A_I\}$ and $\{B_1,\ldots,B_I\}$ of the sample space $\Omega$ such that $A_i \in \sigma(X), B_j \in \sigma(Y)$ for all $i,j$.
The $\rho$-mixing and $\beta$-mixing conditions do not imply each other but each, by itself, implies $\alpha$-mixing~\citep{bradley2005basic}. For stationary gaussian processes, $\rho$-mixing is equivalent to $\alpha$-mixing (see Fact~\ref{fact:alpharhoEquiv} below).

The $\beta$-mixing condition has been of interest in statistical learning theory for obtaining finite sample generalization error bounds for empirical risk minimization~\citep[Sec. 3.4]{vidyasagar2003learning} and boosting~\citep{kulkarni2005convergence}
for dependent samples. There is also work on estimating $\beta$-mixing coefficients from data~\citep{mcdonald2011estimating}. The usefulness of $\beta$-mixing lies in the fact that by using a simple blocking technique, that goes back to the work
of~\cite{yu1994rates}, one can often reduce the situation to the iid setting. At the same time, many interesting processes such as Markov and hidden Markov processes satisfy a $\beta$-mixing
condition~\citep[Sec. 3.5]{vidyasagar2003learning}. To the best of our knowledge, however, there are no results showing that RE and DB conditions holds under mixing conditions. Next we fill this gap in the literature. Before we continue, we note
an elementary but useful fact about mixing conditions, viz., they persist under arbitrary measurable transformations of the original stochastic process.

%

\begin{fact}\label{fact:mixingEquiv}
Suppose a stationary process $ \{U_t\}_{t=1}^T $ is $\alpha$, $ \rho$, or $\beta$-mixing. Then the  stationary sequence $ \{f(U_t)\}_{t=1}^T $, for any measurable function $ f(\cdot)$, also is mixing in the same sense with its mixing coefficients bounded by those of the original sequence. 
\end{fact}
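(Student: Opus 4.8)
The plan is to reduce everything to one elementary observation: every mixing coefficient in Section~\ref{sec:mixingintro} is defined as a supremum of some functional over a feasible set indexed by a pair of sub-$\sigma$-algebras of the process (a ``past'' block and a ``future'' block), and applying a measurable map can only shrink these $\sigma$-algebras, so the feasible set for the transformed process is contained in that for the original and its supremum is no larger. Concretely, for a measurable $f$ and any index set $S$ one has $\sigma(\{f(U_s):s\in S\})\subseteq\sigma(\{U_s:s\in S\})$, because each generator $\{f(U_s)\in B\}=\{U_s\in f^{-1}(B)\}$ lies in $\sigma(U_s)\subseteq\sigma(\{U_s:s\in S\})$, while $\sigma(\{f(U_s):s\in S\})$ is by definition the smallest $\sigma$-algebra containing all such generators. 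This holds for finite and infinite $S$ alike, hence in particular for the ``past'' block $\{s\le t\}$ and the ``future'' block $\{s\ge t+l\}$.

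First I would record that $\{f(U_t)\}$ is itself strictly stationary: applying $f$ coordinatewise is a measurable map between product spaces, and pushing the finite-dimensional equalities in distribution of $\{U_t\}$ forward through it yields the corresponding equalities for $\{f(U_t)\}$. Then, fixing a lag $l\ge 1$, write $\mathcal{P}=\sigma(U_s:s\le t)$ and $\mathcal{F}=\sigma(U_s:s\ge t+l)$ for the past/future $\sigma$-algebras of the original process and $\mathcal{P}_f\subseteq\mathcal{P}$, $\mathcal{F}_f\subseteq\mathcal{F}$ for those of the transformed one. For $\alpha$-mixing, the supremum defining the coefficient of $\{f(U_t)\}$ ranges over $A\in\mathcal{P}_f$, $B\in\mathcal{F}_f$, a subset of the pairs $A\in\mathcal{P}$, $B\in\mathcal{F}$ appearing for $\{U_t\}$, so the former coefficient is $\le$ the latter. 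For $\rho$-mixing, any mean-zero unit-variance $g\in L^2(\mathcal{P}_f)$ is also mean-zero unit-variance in $L^2(\mathcal{P})$ (and likewise on the future side), so again the feasible set shrinks. For $\beta$-mixing, any pair of finite partitions of $\Omega$ with cells in $\mathcal{P}_f$, $\mathcal{F}_f$ is also a pair of finite partitions with cells in $\mathcal{P}$, $\mathcal{F}$, and the same containment-of-suprema reasoning applies. Letting $l\to\infty$ then transfers the mixing property itself, with coefficients dominated termwise.

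The argument is essentially bookkeeping, so I do not anticipate a genuine obstacle; the only point warranting care is the $\sigma$-algebra inclusion for the \emph{infinite} past and future blocks, which is why I would phrase it through the ``generated by'' definition of the relevant product/tail $\sigma$-algebras rather than attempting to describe their elements explicitly. The one stylistic choice worth making up front is to state the $\alpha$, $\rho$, and $\beta$ dependence measures in a single uniform ``supremum over a feasible set determined by $\sigma(X)$ and $\sigma(Y)$'' template, so that monotonicity of the supremum under shrinking the feasible set disposes of all three notions in one stroke.
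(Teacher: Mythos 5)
Your argument is correct: the paper states this Fact without proof (it is asserted as ``elementary''), and your reduction---$\sigma(\{f(U_s):s\in S\})\subseteq\sigma(\{U_s:s\in S\})$ for every index block, so each dependence coefficient, being a supremum over a feasible set determined by the pair of $\sigma$-algebras, can only decrease---is exactly the standard justification one would supply. The one point worth keeping explicit in a write-up is the stationarity of $\{f(U_t)\}$ via pushing the finite-dimensional distributional equalities through the coordinatewise map, which you do note.
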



\section{Gaussian Processes under $\alpha$-Mixing} \label{sect:gaus}

Here we will study Gaussian processes under the $ \alpha $-mixing condition which is weaker than that of the $ \beta $-mixing. We make the following additional assumptions.

\begin{assump}[Gaussianity]\label{as:gauss}
The process $(X_t,Y_t)$ is a Gaussian process.
\end{assump}

Assume  $(X_t, Y_t)_{t=1}^T$ satisfies Assumptions~\ref{as:stat},~\ref{as:0mean}, and~\ref{as:gauss}. Note that $X_t \sim \mathcal{N}(\vc{0},\Sigma_X)$ and $Y_t \sim \mathcal{N}(\vc{0},\Sigma_Y)$.
To control dependence over time, we will assume $\alpha$-mixing, the weakest notion among $\alpha$, $\rho$ and $\beta$-mixing.

\begin{assump}[$\alpha$-Mixing]\label{as:alphaMix}
The process $(X_t,Y_t)$ is an $\alpha$-mixing process. Let $ S_{\alpha}(T):=\sum_{l=0}^{T}\alpha(l) $. If $ \alpha(l) $ is summable, we let $ \tilde{\alpha}:=\lim\limits_{T \rightarrow \infty} S_{\alpha}(T)<\infty $.
\end{assump}

We will use the following useful fact~\citep[p. 111]{ibragimov1978gaussian} in our analysis. 
\begin{fact}\label{fact:alpharhoEquiv}
For any stationary Gaussian process, the $\alpha$ and $\rho$-mixing coefficients are related as follows:
$$
\forall l\ge 1,\ \alpha(l) \le \rho(l) \le 2\pi \alpha(l) .
$$
\end{fact}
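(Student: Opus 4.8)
The plan is to prove the two-sided bound $\alpha(l) \le \rho(l) \le 2\pi\alpha(l)$ by treating the two inequalities separately, since only the right-hand one uses Gaussianity. The left inequality $\alpha(l) \le \rho(l)$ holds for any stationary process, not just Gaussian ones: it is a completely general comparison between the two dependence measures. To see it, fix events $A \in \sigma(X_{-\infty:t})$ and $B \in \sigma(X_{t+l:\infty})$ and apply the definition of $\rho$ to the (centered, normalized) indicator functions $f = (\mathbbm{1}_A - P(A))/\sqrt{P(A)(1-P(A))}$ and $g = (\mathbbm{1}_B - P(B))/\sqrt{P(B)(1-P(B))}$; since $\mathrm{Cov}(\mathbbm{1}_A,\mathbbm{1}_B) = P(A\cap B) - P(A)P(B)$, one gets $|P(A\cap B) - P(A)P(B)| = \sqrt{P(A)(1-P(A))P(B)(1-P(B))}\,|\mathrm{Cov}(f,g)| \le \tfrac14 \rho(l)$. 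Taking the supremum over $A,B$ gives $\alpha(l) \le \tfrac14 \rho(l) \le \rho(l)$ (the cruder bound $\alpha \le \rho$ is all that is claimed, so the factor $\tfrac14$ is a bonus and I would just state $\alpha(l)\le\rho(l)$).

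The right inequality $\rho(l) \le 2\pi\alpha(l)$ is the substantive part and is where Gaussianity enters; this is essentially a classical result of Kolmogorov and Rozanov, and I would cite it as the reference \citep[p.~111]{ibragimov1978gaussian} does rather than reprove it from scratch. The underlying idea, if one wanted to sketch it: by a standard approximation argument it suffices to bound $\mathrm{Cov}(f(X_{-\infty:t}), g(X_{t+l:\infty}))$ for $f,g$ that are finite linear combinations of the coordinates, i.e. for jointly Gaussian pairs. For a pair of (multivariate) Gaussian vectors $U,V$ the maximal correlation $\rho(U,V)$ equals the largest canonical correlation, so the problem reduces to controlling the canonical correlations of the "past" and "future" Gaussian subspaces. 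One then relates the maximal correlation coefficient of two jointly Gaussian variables $\xi,\eta$ with correlation $r$ to their $\alpha$-mixing coefficient by a direct computation: $\alpha(\xi,\eta)$ can be bounded below by $c|r|$ for an explicit constant (choosing $A,B$ to be well-chosen half-lines in the Gaussian plane and evaluating the resulting bivariate-normal orthant-type probability), and the sharp constant works out so that $|r| \le 2\pi\,\alpha(\xi,\eta)$. Passing back up through the canonical-correlation characterization and the approximation argument yields $\rho(l) \le 2\pi\alpha(l)$.

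The main obstacle is the Gaussian-plane computation behind $\rho \le 2\pi\alpha$: one needs the fact that for jointly standard normal $(\xi,\eta)$ with correlation $r$, $\sup_{A,B}|P(\xi\in A,\eta\in B) - P(\xi\in A)P(\eta\in B)|$ is bounded below by a universal multiple of $|r|$ with the correct constant $1/(2\pi)$, which requires either an explicit estimate of a bivariate normal probability or a series expansion of the bivariate density in Hermite polynomials (Mehler's formula) followed by extracting the leading term. Since the paper only \emph{uses} this fact and attributes it to \citep{ibragimov1978gaussian}, I would present the easy direction in full and simply invoke the reference for the hard direction, noting that no originality is claimed.
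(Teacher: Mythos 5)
Your proposal is correct, and it is consistent with what the paper does: the paper offers no proof of this Fact at all, simply citing \citet[p.~111]{ibragimov1978gaussian}, so your plan of proving the elementary inequality $\alpha(l)\le\tfrac14\rho(l)\le\rho(l)$ in full and invoking the classical Kolmogorov--Rozanov theorem for $\rho(l)\le 2\pi\alpha(l)$ matches (and slightly exceeds) the paper's treatment. Your sketch of the hard direction is also accurate --- the reduction to canonical correlations via the Gaussian maximal-correlation theorem, followed by the orthant-probability bound $P(\xi>0,\eta>0)-\tfrac14=\tfrac{\arcsin r}{2\pi}\ge \tfrac{r}{2\pi}$, is exactly how the cited result is proved.
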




\begin{pr}[Deviation Bound, Gaussian Case]\label{result:rhoDB}
Suppose Assumptions \ref{as:stat}--\ref{as:alphaMix} hold.
Then, there exists a deterministic positive constant $\tilde{c}$, and a free parameter \black{$b>0$}, such that, for $\black{T\ge \sqrt{\frac{b+1}{\tilde{c}}} \log({pq})}$, we have
$$
\mathbb{P}\left[ 
\vertiii{\frac{\mt{X}'\mt{W}}{T} }_{\infty}
 \le 
\mathbb{Q}(\mt{X},\mt{W},\bstar)  
\mathbb{R}(p,q,T)
\right] 
\ge
 1-8 \exp(-b\log(pq))
$$
where 
\begin{align*}
\mathbb{Q}(\mt{X},\mt{W},\bstar)
&=
8 \pi\sqrt{\frac{(b+1)}{\tilde{c}}}
\bpar{
\vertiii{\Sigma_{{X}}}
\bpar{1+ \max_{1\le i \le p}\vertii{\bstar_{:i}}_2^2 }
+ \vertiii{\Sigma_{{Y}} } }
\\
\mathbb{R}(p,q,T)& =S_{\alpha}(T)\sqrt{\frac{\log(pq)}{T} } .
\end{align*}

\end{pr}

\begin{rem}
Note that the free parameter $b$ serves to trade-off between the success probability on the one hand and the sample size threshold and multiplier function $\mathbb{Q}$ on the other. A large $b$
increases the success probability but worsen the sample size threshold and the multiplier function.
\end{rem}

%

\begin{pr}[RE, Gaussian Case] \label{result:rhoRE}
Suppose Assumptions \ref{as:stat}--\ref{as:alphaMix} hold. There exists some universal constant $c>0$, such that for sample size  \black{$T\ge \frac{42 e \log(p)}{c \min\{1, \eta^2\}}$}, we have, with probability at least $1-2\exp\left( -\frac{c}{2}T \min\{1,\eta^2 \} \right)$ that
for every vector $ \vc{v} \in \R^p $,
\begin{equation}\label{eq:gRE}
|\vc{v}'\hat{\Gamma} \vc{v} |
>\alphac \Vert \vc{v}\Vert_2^2 - 
\tau(T,  p ) 
\Vert \vc{v} \Vert_1^2  ,\\
\end{equation}
where 
\begin{align*}
\alphac &= \frac{1}{2} \lmin{\Sigma_{X}},&
\tau(T,p) &=  \alphac/\ceil{c\frac{T}{4 \log(p)} \min\{1,\eta^2\}},&\text{and} \\
\eta =&
\frac{\lmin{\Sigma_X}}{108\pi  S_{\alpha}(T)\lmax{\Sigma_X}}.
\end{align*}
\end{pr}

\begin{rem}
Note that, in Theorem \ref{result:master}, it is advantageous to have a large $ \alphac $ and a smaller $ \tau $ so that the convergence rate is fast and the initial sample threshold for the result to hold is small. The result above, therefore, clearly shows that it is advantageous to have a well-conditioned $ \Sigma_X $.
\end{rem}

%


\subsection{Estimation and Prediction Errors}
Substituting the RE and DB constants from Propositions~\ref{result:rhoDB}-\ref{result:rhoRE} into Theorem~\ref{result:master} immediately yields the following guarantees.
\begin{cor}[Lasso Guarantees for Gaussian Vectors under $\alpha$-Mixing]
\label{cor:gauss}
Suppose Assumptions \ref{as:stat}--\ref{as:alphaMix} hold.
Let $c,\tilde{c}$ be fixed constants and $b$ be free parameter defined as in Propositions~\ref{result:rhoDB} and \ref{result:rhoRE}. Then, for sample size
\begin{align*}
T&\ge \max\bcur{
\frac{\log(p)}{c\min\{1, \eta^2\}}
\max\bcur{
42e, 128s
}
,
\log(pq)\sqrt{\frac{b+1}{\tilde{c}}}
}
&\\
&\text{where }
\eta =
 \frac{\lmin{\Sigma_X}}{108\pi  S_{\alpha}(T)\lmax{\Sigma_X}}
\end{align*}
we have, with probability at least
$
1
-
2\exp\left( -\frac{c}{2}T \min\{1,\eta^2 \} \right)
-
8 \exp(-b\log(pq))
$,
that the lasso error bounds~\eqref{eq:l2errorBdd} and~\eqref{eq:predErrBdd} hold with
\begin{align*}
\alphac &= \frac{1}{2} \lmin{\Sigma_{X}} \\
\lambda_T&= 4 \mathbb{Q}(\mt{X},\mt{W},\bstar) \mathbb{R}(p,q,T)
\end{align*}
where
\begin{align*}
\mathbb{Q}(\mt{X},\mt{W},\bstar)
&=
8 \pi\sqrt{\frac{(b+1)}{\tilde{c}}}
\bpar{
\vertiii{\Sigma_{X}}
\bpar{1+ \max_{1\le i \le p}\vertii{\bstar_{:i}}_2^2 }
+ \vertiii{\Sigma_{Y} } } ,
\\
\mathbb{R}(p,q,T)& =S_{\alpha}(T)\sqrt{\frac{\log(pq)}{T} } .
 \end{align*}
\end{cor}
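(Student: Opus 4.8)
\emph{Proof proposal.} The plan is to read the claim directly off the Master Theorem (Theorem~\ref{result:master}) by feeding it the two high-probability guarantees of Propositions~\ref{result:rhoDB} and~\ref{result:rhoRE}; no new probabilistic estimate is needed. Assumptions~\ref{as:stat}--\ref{as:alphaMix} are in force and Assumption~\ref{as:spars} is maintained throughout the paper, so both propositions apply verbatim with the stated constants $\alphac = \tfrac12\lmin{\Sigma_X}$, $\tau(T,p) = \alphac/\ceil{c\tfrac{T}{4\log(p)}\min\{1,\eta^2\}}$, and $\mathbb{Q},\mathbb{R}$ exactly as displayed.

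First I would record the two failure events. Let $\mathcal{E}_{\mathrm{RE}}$ be the event that $\hat{\Gamma}$ fails the lower RE$(\alphac,\tau)$ condition and $\mathcal{E}_{\mathrm{DB}}$ the event that $\tfrac1T\vertiii{\mt{X}'\mt{W}}_\infty \le \mathbb{Q}(\mt{X},\mt{W},\bstar)\mathbb{R}(p,q,T)$ fails. Proposition~\ref{result:rhoRE} gives $\mathbb{P}(\mathcal{E}_{\mathrm{RE}}) \le 2\exp(-\tfrac{c}{2}T\min\{1,\eta^2\})$ once $T \ge \tfrac{42 e\log(p)}{c\min\{1,\eta^2\}}$, and Proposition~\ref{result:rhoDB} gives $\mathbb{P}(\mathcal{E}_{\mathrm{DB}}) \le 8\exp(-b\log(pq))$ once $T \ge \sqrt{\tfrac{b+1}{\tilde c}}\log(pq)$. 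Both thresholds are dominated by the sample-size lower bound asserted in the corollary, so a union bound over $\mathcal{E}_{\mathrm{RE}}\cup\mathcal{E}_{\mathrm{DB}}$ shows that, with probability at least $1 - 2\exp(-\tfrac{c}{2}T\min\{1,\eta^2\}) - 8\exp(-b\log(pq))$, the RE and DB conditions hold simultaneously.

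The one step that deserves care --- and the only place where the specific shape of the sample-size requirement matters --- is verifying the curvature-to-tolerance hypothesis $\alphac \ge 32 s\tau$ of Theorem~\ref{result:master}. Substituting the explicit $\tau$, this inequality is equivalent to $\ceil{c\tfrac{T}{4\log(p)}\min\{1,\eta^2\}} \ge 32 s$, which holds whenever $c\tfrac{T}{4\log(p)}\min\{1,\eta^2\} \ge 32 s$, i.e.\ whenever $T \ge \tfrac{128 s\log(p)}{c\min\{1,\eta^2\}}$ --- precisely the ``$128 s$'' term inside the maximum defining the hypothesis. I would also note the (only apparent) circularity: $\eta$ depends on $T$ through $S_\alpha(T)$, so the sample-size condition is implicit in $T$; this is harmless, since the same $\eta$ computed from the same $T$ is what appears in Propositions~\ref{result:rhoDB}--\ref{result:rhoRE} and in the corollary, and the statement is simply read as holding for every $T$ satisfying the displayed inequality.

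Finally, on the good event every hypothesis of Theorem~\ref{result:master} is met, so for any $\lambda_T \ge 4\mathbb{Q}(\mt{X},\mt{W},\bstar)\mathbb{R}(p,q,T)$ the bounds~\eqref{eq:l2errorBdd} and~\eqref{eq:predErrBdd} hold; taking $\lambda_T = 4\mathbb{Q}(\mt{X},\mt{W},\bstar)\mathbb{R}(p,q,T)$ and recalling $\alphac = \tfrac12\lmin{\Sigma_X}$ gives exactly the stated guarantees. I expect the whole argument to be bookkeeping of constants plus the one-line check of $\alphac \ge 32 s\tau$ and the comparison of the three sample-size thresholds; there is no analytically hard step, which is why this is stated as a corollary rather than a theorem.
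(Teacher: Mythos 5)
Your proposal is correct and follows exactly the route the paper intends: the corollary is obtained by a union bound over the failure events of Propositions~\ref{result:rhoDB} and~\ref{result:rhoRE} and substitution into Theorem~\ref{result:master}, with the $128s$ term in the sample-size condition arising precisely from your check that $\alphac \ge 32s\tau$ holds when $cT\min\{1,\eta^2\}/(4\log p) \ge 32s$. The paper gives no separate proof beyond the phrase ``immediately yields,'' so your bookkeeping (including the remark on the implicit dependence of $\eta$ on $T$) is exactly the intended argument.
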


\begin{rem}
If the $ \alpha$-mixing coefficients are summable, i.e., $ S_{\alpha}(T) \le \tilde{\alpha} <\infty,\, \forall T$, then we get the usual convergence rate of $O(\sqrt{\tfrac{\log(pq)}{T}})$. Also, the threshold sample size is $O \bpar{s\log(pq)}$. This is in agreement with what  happens in the iid Gaussian case. 
When $\alpha(l)$ is not summable then both the initial sample threshold required for the guarantee to be valid as well as the rate of error decay deteriorate. The latter becomes \editt{ $O(S_{\alpha}(T)\sqrt{\tfrac{\log(pq)}{T} })$}. We see that as long as $ S_{\alpha}(T) \in o\left( \sqrt{T} \right) $, we still have consistency. In the finite order stable Gaussian VAR case considered by \cite{basu2015regularized}, the $\alpha$-mixing coefficients are geometrically decaying and hence summable (see Example~\ref{ex:GaussVAR} for details).
\end{rem}


\subsection{Examples}
We illustrate applicability of our theory developed in this section using the examples below. 

\begin{exmp}[Gaussian VAR]\label{ex:GaussVAR} Transition matrix estimation in sparse stable VAR models has
been considered by several authors in recent years \citep{davis2016sparse,han2013transition,song2011large}.
The lasso estimator is a natural choice for the problem.
 
Formally a finite order Gaussian VAR($ d $) process is defined as follows.
Consider a sequence of serially ordered random vectors $(Z_t)_{t=1}^{T+d}$, ${Z_t}\in \R^p$ that admits the following auto-regressive representation:
\begin{align}\label{eq:VAR(1)}
{Z_t} = \mt{A}_1 {Z}_{t-1}+ \dots +  \mt{A}_d {Z}_{t-d} + {\mathcal{E}}_t
\end{align}
where each $\mt{A}_k, k=1, \dots, d$ is  a sparse non-stochastic coefficient matrix in $\R^{p \times p}$ and innovations ${\mathcal{E} }_t$ are $p$-dimensional random vectors from $\mathcal{N}( \vc{0}, \Sigma_{\epsilon})$ with $\lmin{\Sigma_{\epsilon}}>0$ and $\lmax{\Sigma_{\epsilon}}< \infty$. 

Assume that the VAR($ d $) process is \emph{stable}; i.e. $ \mathrm{det}\bpar{\mt{I}_{p \times p}-\sum_{k=1}^{d}\mt{A}_k z^k} \neq 0,\, \forall \verti{z} \le 1 $. 
Now, we identify $X_t:=({Z}'_t,\cdots,{Z}'_{t-d+1})'$ and $Y_t :=Z_{t+d}$ for $t = 1,\ldots,T$. 

We can verify (see Appendix \ref{veri:VAR} for details) that Assumptions \ref{as:spars}--\ref{as:alphaMix} hold. Note that $\bstar = (\mt{A}_1,\ldots,\mt{A}_d)' \in \R^{dp \times p}$.
As a result, Propositions  \ref{result:rhoDB} and \ref{result:rhoRE}, and thus Corollary \ref{cor:gauss} follow   and hence we have all the high probabilistic guarantees for lasso on Example \ref{ex:GaussVAR}. This shows that our theory covers the stable Gaussian VAR models for which \cite{basu2015regularized} provided lasso errors bounds. \\

We state the following convenient fact because it allows us to study any finite order VAR model by considering its equivalent VAR($ 1 $) representation. See Appendix \ref*{veri:VAR} for details.

\begin{fact}
Every VAR($d$) process can be written in VAR($ 1 $) form (see e.g. \cite[Ch 2.1]{lutkepohl2005new}).
\end{fact}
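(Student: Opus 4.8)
The plan is to invoke the standard companion-form (state-space stacking) construction, which realizes any finite-order VAR as a first-order VAR on an enlarged state vector. First I would define the stacked process $\widetilde{Z}_t := (Z_t', Z_{t-1}', \ldots, Z_{t-d+1}')' \in \R^{dp}$, the companion matrix
\[
\mathcal{A} := \begin{bmatrix} \mt{A}_1 & \mt{A}_2 & \cdots & \mt{A}_{d-1} & \mt{A}_d \\ \mt{I}_p & \mathbf{0} & \cdots & \mathbf{0} & \mathbf{0} \\ \mathbf{0} & \mt{I}_p & \cdots & \mathbf{0} & \mathbf{0} \\ \vdots & & \ddots & & \vdots \\ \mathbf{0} & \mathbf{0} & \cdots & \mt{I}_p & \mathbf{0} \end{bmatrix} \in \R^{dp \times dp},
\]
and the stacked innovation $\widetilde{\mathcal{E}}_t := (\mathcal{E}_t', \mathbf{0}', \ldots, \mathbf{0}')' \in \R^{dp}$, in which only the first block is nonzero.

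Next I would verify, block-row by block-row, the identity $\widetilde{Z}_t = \mathcal{A}\,\widetilde{Z}_{t-1} + \widetilde{\mathcal{E}}_t$. The top block row reproduces exactly the defining recursion~\eqref{eq:VAR(1)}, namely $Z_t = \sum_{k=1}^{d} \mt{A}_k Z_{t-k} + \mathcal{E}_t$, while each lower block row is the trivial shift identity $Z_{t-j+1} = Z_{t-j+1}$ contributed by the corresponding subdiagonal $\mt{I}_p$ block. This exhibits $(\widetilde{Z}_t)$ as a VAR($1$) process with transition matrix $\mathcal{A}$, which is the content of the claim.

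Finally I would record that the relevant structural hypotheses transfer. Strict stationarity of $(\widetilde{Z}_t)$ is immediate because it is a fixed, time-invariant linear (hence measurable) function of $(Z_t)$. The Gaussian-innovation structure is inherited: $\widetilde{\mathcal{E}}_t$ is Gaussian, with covariance of rank $p$, hence singular when $d > 1$ --- a degeneracy that is immaterial for the uses made of this fact here. Stability transfers through the companion-matrix identity $\det(\lambda \mt{I}_{dp} - \mathcal{A}) = \det(\lambda^d \mt{I}_p - \sum_{k=1}^{d} \lambda^{d-k} \mt{A}_k)$; substituting $z = 1/\lambda$ shows that the VAR($d$) stability condition ``$\det(\mt{I}_{p} - \sum_{k=1}^{d} \mt{A}_k z^k) \neq 0$ for all $|z| \le 1$'' is equivalent to $\sr{\mathcal{A}} < 1$, i.e.\ all eigenvalues of $\mathcal{A}$ lying strictly inside the unit disk.

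I do not expect any genuine obstacle: the argument is routine bookkeeping with block matrices. The only points that require a modicum of care are (i) aligning the time indices in the stacked vector so that the $\mt{I}_p$ blocks of $\mathcal{A}$ land in the correct positions, and (ii) observing --- and being comfortable with the fact --- that for $d > 1$ the induced VAR($1$) necessarily has a degenerate innovation covariance, which is expected and does not interfere with the downstream use of this representation in Appendix~\ref{veri:VAR}.
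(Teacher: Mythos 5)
Your proposal is correct and follows exactly the same route as the paper: the companion-form construction with the stacked state vector, block companion matrix, and zero-padded innovation given in Appendix~\ref{veri:VAR} (equations~\eqref{eq:VAR(d)}--\eqref{eq:VAR1}). The extra details you supply --- the block-row verification, the singular innovation covariance for $d>1$, and the equivalence of the stability condition with the spectral radius of the companion matrix being less than one --- are all consistent with how the paper uses this representation.
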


Therefore, without loss of generality, we can consider VAR($ 1 $) model in the ensuing Examples. 

%
%
\end{exmp}

\begin{exmp}[Gaussian VAR with Omitted Variable]\label{ex:misVAR}
\edit{We  study lasso estimation for a VAR(1) process when there are endogenous variables omitted. This arises naturally when the underlying DGM is high-dimensional but not all variables are available (e.g., it is impossible to observe them or perhaps very costly to measure them) to the researcher to perform estimation and prediction. Such a situation can also arise when the researcher mis-specifies the scope of the model.}

\edit{Notice that the system of the retained set of variables is no longer a finite order VAR (and thus non-Markovian). As we describe below, the target of estimation is still the best linear predictor (in the least squares sense) of the future given the past.} There is model mis-specification and this example also serves to illustrate that our theory is applicable to models beyond the finite order VAR setting.

Consider a VAR(1) process $ (Z_t, \Xi_t)_{t=1}^{T+1} $ such that each vector in the sequence is generated by the recursion below:
$$
(Z_t; \Xi_t) = \mt{A} (Z_{t-1}; \Xi_{t-1}) + (\mathcal{E} _{Z, t-1}; \mathcal{E} _{\Xi, t-1})
$$
where $Z_t \in \R^{p} $,  $\Xi_t \in \R $, $ \mathcal{E} _{Z, t} \in \R^{p}  $, and $\mathcal{E} _{\Xi, t} \in \R $ are partitions of the random vectors $ (Z_t, \Xi_t) $ and $ \mathcal{E}_t $ into $ p $ and $ 1 $ variables. Also,
$$
\mt{A}:= 
\bbra{
\begin{array}{cc}
\mt{A}_{ZZ} & \mt{A}_{Z\Xi} \\ 
\mt{A}_{\Xi Z} & \mt{A}_{\Xi \Xi}
\end{array}  
}
$$
is the coefficient matrix of the VAR(1) process with $\mt{A}_{Z \Xi }   $ $ 1 $-sparse, $ \mt{A}_{Z Z } $  $ p $-sparse and $ r(\mt{A})<1 $. $ \mathcal{E}_t := (\mathcal{E} _{X, t-1}; \mathcal{E} _{Z, t-1}) $ for $ t=1,\ldots,T+1$ are iid draws from a Gaussian white noise process. 

We are interested in the best (in the least squares sense) $ 1 $-lag predictor of $Z_t$ as a function of $ Z_{t-1}$. Recall that
$$
\bstar := \argmin_{\mt{B} \in \R ^{p\times p}} \E \bpar{  
\vertii{
Z_t - \mt{B}'Z_{t-1}
}^2_2
}
$$
Note that $Z_t$ is not necessarily a finite order VAR process. Now,  set $ X_t := Z_t $ and $ Y_t :=Z_{t+1}$ for $ t=1,\ldots,T$. It can be shown that $ (\bstar)' =\mt{A}_{Z Z }+\mt{A}_{Z \Xi } \Sigma_{\Xi Z }(0)(\Sigma_Z)^{-1} $.
We can verify that Assumptions \ref{as:spars}--\ref{as:alphaMix} hold. See Appendix \ref{veri:misVAR} for details.
As a result,  Propositions  \ref{result:rhoDB} and \ref{result:rhoRE}, and thus Corollary \ref{cor:gauss} follow   and hence we have all the high probabilistic guarantees for lasso on this non-Markovian example.

\end{exmp}


\section{Subweibull Random Vectors under $\beta$-Mixing}\label{section:subweibull}
%

Existing analyses of lasso mostly assume data have subgaussian or subexponential tails. These assumptions ensure that the moment generating function exists, at least for some values of the free parameter.
Non-existence of the moment generating function is often taken as the definition of having a heavy tail~\citep{foss2011introduction}. 
We now introduce a family of random variables that subsumes subgaussian and subexponential random variables. In addition, it includes some heavy tailed distributions. 

\subsection{Subweibull Random Variables and Vectors}\label{section:subweibulldef}
Among the several equivalent definitions of the subgaussian and subexponential random variables, we recall the ones that are based on the growth behavior of moments. Recall that a subgaussian (resp. subexponential) random variable $X$ can be defined as one for which $\E(|X|^p)^{1/p} \le K\sqrt{p}, \,\forall p\ge 1$ for some constant $K$ (resp. $ \E(|X|^p)^{1/p} \le Kp, \,\forall p\ge 1 $).
A natural generalization of these definitions that allows for heavier tails is as follows. Fix some $\gamma > 0$, and require
\begin{align*}
\vertii{X}_p:= (\E {\verti{X}^{}}^p)^{1/p} \le K p^{1/\gamma}, \;\forall p\ge 1 \wedge \gamma
\end{align*} 

There are a few different equivalent ways to imposing the condition above including a tail condition that says that the tail is no heavier than that of a Weibull random variable with parameter $ \gamma$. That is the reason why we call this family ``subweibull$ (\gamma)"$.


\begin{lm}\label{lem: subWei}
(Subweibull properties)
Let $ X $ be a random variable. Then the following statements are equivalent for every $ \gamma > 0  $. The constants $ K_1, K_2, K_3 $ differ from each other at most by a constant depending only on $\gamma$. 
\begin{enumerate}
\item The tails of $ X $ satisfies
\begin{align*}
\prob\bpar{\verti{X} >t } \le 2 \exp\bcur{-(t/K_1)^{\gamma}},\; \forall t\ge 0 .
\end{align*} 
\item The moments of $ X $ satisfy, 
\begin{align*}
\vertii{X}_p:= (\E {\verti{X}^{}}^p)^{1/p} \le K_2 p^{1/\gamma}, \;\forall p\ge 1\wedge \gamma.
\end{align*}
\item The moment generating function of $ \verti{X}^{\gamma} $ is finite at some point; namely
\begin{align*}
\E \bbra{\exp\bpar{\verti{X}/K_3}^{\gamma} } \le 2 .
\end{align*}
\end{enumerate}

\end{lm}

\begin{rem}
A similar tail condition is called ``Condition C0'' by \cite{tao2013random}. However, to the best of our knowledge, this family has not been systematically introduced.
The equivalence above is related to the theory of Orlicz spaces (see, for example, Lemma 3.1 in the lecture notes of \cite{pisier2016subgaussian}).
\end{rem}

\begin{defn}(Subweibull($ \gamma $) Random Variable and Norm).
A random variable $ X $ that satisfies any property in Lemma~\ref{lem: subWei} is called a subweibull($ \gamma $) random variable. The subweibull($ \gamma $) norm associated with  $ X $, denoted $ \vertii{X}_{\psi_\gamma} $, is defined to be the smallest constant such that the moment condition in definition Lemma~\ref{lem: subWei} holds. In other words, for every $ \gamma >0 $,
\begin{align*}
\vertii{X}_{\psi_\gamma}:= \sup_{p\ge 1 }(\E {\verti{X}^{}}^p)^{1/p}  p^{-1/\gamma} .
\end{align*}
\end{defn}

It is easy to see that $ \vertii{\cdot}_{\psi_\gamma} $, being a pointwise supremum of norms, is indeed a norm on the space of subweibull($\gamma$) random variables.


\begin{rem}
It is common in the literature (see, for example \cite{foss2011introduction}) to call a random variable \emph{heavy-tailed} if its tail decays slower than that of an exponential random variable. This way of distinguishing between light and heavy tails is natural because the moment generating function for a heavy-tailed random variable thus defined fails to exist at any point. Note that, under such a definition, subweibull($ \gamma$) random variables with $ \gamma <1 $ include heavy-tailed random variables.
\end{rem}

In our theoretical analysis, we will often be dealing with squares of random variables. The next lemma tells us what happens to the subweibull parameter $\gamma$ and the associated constant, under squaring.

\begin{lm}\label{lemma:gammaNormOfSquares}
For any $ \gamma \in (0, \infty) $,
if a random variable $ X $ is subweibull($ 2\gamma$) then $ X^2 $ is subweibull($ \gamma$). Moreover, 
\begin{align*}
\norm[X^2]{\psi_\gamma} \le 2^{1/\gamma}\norm[X]{\psi_{2 \gamma}}^2 .
\end{align*}
\end{lm}

%

We now define the subweibull norm of a random vector to capture dependence among its coordinates. It is defined using one dimensional projections of the random vector in the same way as we define
subgaussian and subexponential norms of random vectors. 

\begin{defn}\label{def:vector}
Let $ \gamma \in (0, \infty)$. A random  vector $ X \in \R^p $ is said to be a subweibull($ \gamma$) random vector if all of its one dimensional projections are subweibull($ \gamma$) random variables. We define the
subweibull($\gamma$) norm of a random vector as,
\begin{align*}
\norm[X]{\psi_\gamma} := \sup_{v \in S^{p-1}} \norm[v'X]{\psi_\gamma}
\end{align*}
where $ S^{p-1} $ is the unit sphere in $ \R^p $.
\end{defn}



Having introduced the subweibull family, we present the assumptions required for the lasso guarantees. In proving our results, we need measures that control the amount of dependence in the observations across time as well as within a given time period. 

\begin{assump}\label{assump:betamixing}
The process $ (X_t,Y_t) $ is geometrically $ \beta$-mixing; i.e., there exist constants $ c>0 $ and $ \gamma_1>0 $ such that
\begin{align*}
\beta(n) \le 2\exp(-c\cdot n^{\gamma_1}),\; \forall n\in \mathbb{N}.
\end{align*}
\end{assump}


\begin{assump}\label{assump:subW}
Each random vector in the sequences $ (X_t) $ and $ (Y_t) $ follows a  subweibull($ \gamma_2 $) distribution with $\norm[X_t]{\psi_{\gamma_2}} \le K_X$, $\norm[Y_t]{\psi_{\gamma_2}} \le K_Y$ for $t=1,\cdots, T$.
\end{assump}

Finally, we make an joint assumption on the allowed pairs $\gamma_1, \gamma_2$. 

\begin{assump} \label{assump:gammaSmall}
Assume $ \gamma<1 $ where
\begin{align*}
 \gamma &:= \bpar{\frac{1}{\gamma_1}+\frac{2}{\gamma_2}}^{-1} .
 \end{align*}
\end{assump}

\begin{rem}\label{remark:gamma assumption}
Note that  the parameters $ \gamma_1 $ and $ \gamma_2 $ defines a difficulty landscape with smaller values of $\gamma_1, \gamma_2$ corresponding to harder problems. The ``easy case" where $ \gamma_1 \ge 1 $ and $ \gamma_2 \ge 2$ are already addressed in the literature (see, e.g.,~\cite{wong2016lasso}). This paper serves to provide theoretical guarantees for the difficult scenario when the tail probability decays slowly ($ \gamma_2 <2 $) and/or data exhibit strong temporal dependence ($ \gamma_1 < 1 $) and hence extends the literature to the entire spectrum of possibilities, i.e., all positive values of $ \gamma_1 $ and $ \gamma_2 $.
\end{rem}

Now, we are ready to provide high probability guarantees for the deviation bound and restricted eigenvalue conditions.

\begin{pr}[Deviation Bound, $ \beta $-Mixing Subweibull Case]\label{Lemma:SubweibullDevBound}
Suppose Assumptions \ref{as:spars}-\ref{as:0mean} and \ref{assump:betamixing}-\ref{assump:gammaSmall} hold.  Let $c'>0$ be a universal constant and let $K$ be defined as
\begin{align*}
 K := 2^{2/\gamma_2}\bpar{{K_{Y}} + {K_{X}}\bpar{1+\vertiii{\bstar} } }^2 .
 \end{align*}
Then with sample size $T \ge C_1 (\log(pq))^{\tfrac{2}{\gamma} -1 }$,
we have 
\begin{align*}
\prob \bpar{ \frac{1}{T} \vertiii{\mt{X}'\mt{W}}_{\infty} > C_2 K \sqrt{ \frac{\log (pq) }{T} }  }
&\le 2\exp(-c' \log(pq))
 \end{align*}
where the constants $C_1, C_2$ depend only on $c'$ and the parameters $\gamma_1,\gamma_2, c$ appearing in Assumptions~\ref{assump:betamixing} and~\ref{assump:subW}.
\end{pr}

\begin{pr}[RE, $ \beta $-Mixing Subweibull Case]\label{Lemma:SubweibullRE}
Suppose Assumptions \ref{as:spars}-\ref{as:0mean} and \ref{assump:betamixing}-\ref{assump:gammaSmall} hold.
Let 
\begin{align*}
K: =2^{2/\gamma_2}K_X^2 .
\end{align*}
Then for sample size
\begin{align*}
T  \ge
\max \bcur{
\frac{54 K \bpar{2  C_1\log(p)}^{1/\gamma}}{\lmin{\Sigma_X}}
,\,
\left(\frac{54 K}{\lmin{\Sigma_X}}\right)^{\frac{2-\gamma}{1-\gamma}} \left(\frac{C_2}{C_1}\right)^{\frac{1}{1-\gamma}}
}
\end{align*}
we have with probability at least
\begin{align*}
1-  2T \exp  \bcur{   -\tilde{c} T^\gamma}, \text{ where } \tilde{c} = \frac{\bpar{\lmin{\Sigma_X}}^\gamma}{(54 K)^\gamma 2C_1},
\end{align*}
that for all $v \in \R^p$,
\begin{align*}
\frac{1}{T}\vertii{\mt{X}v}^2_2 &\ge  
\alphac \vertii{v}^2_2   -\tau\vertii{v}^2_1 .
\end{align*}
where $\alphac = \half \lmin{\Sigma_X}$ and $\tau = \frac{\alphac}{2\tilde{c}}  \cdot\bpar{ \frac{ \log(p)}{T^\gamma }}$.
Note that the constants $C_1, C_2$ depend only on the parameters $\gamma_1,\gamma_2, c$ appearing in Assumptions~\ref{assump:betamixing} and~\ref{assump:subW}.
\end{pr}
\subsection{Estimation and Prediction Errors}

Substituting the RE and DB constants from Propositions~\ref{Lemma:SubweibullDevBound}-\ref{Lemma:SubweibullRE} into Theorem~\ref{result:master} immediately yields the following guarantee.
\begin{cor}[Lasso Guarantees for Subweibull Vectors under $\beta$-Mixing]
\label{cor:Subweibull}
Suppose Assumptions \ref{as:spars}-\ref{as:0mean} and \ref{assump:betamixing}-\ref{assump:gammaSmall} hold.  Let $c', C_1, C_2, \tilde{c}$ be constants as defined in Propositions~\ref{Lemma:SubweibullDevBound}-\ref{Lemma:SubweibullRE}, and let $ K := 2^{2/\gamma_2}\bpar{{K_{Y}} + {K_{X}}\bpar{1+\vertiii{\bstar} } }^2 $.

Then for sample size

 \begin{align*}
 T  \ge&
 \max \left\{C_1 (\log(pq))^{\tfrac{2}{\gamma} -1 },\,\right. \\
&\quad\quad\quad\left. \frac{54 K \bbra{2\max\{8s/\tilde{c},C_1\} \log(p)}^{1/\gamma}}{\lmin{\Sigma_X}} 
 ,
 \left(\frac{54 K}{\lmin{\Sigma_X}}\right)^{\frac{2-\gamma}{1-\gamma}} \left(\frac{C_2}{C_1}\right)^{\frac{1}{1-\gamma}}
 \right\}
 \end{align*}

 we have with probability at least
 $$
 1
 -
2T \exp  \bcur{   -\tilde{c} T^\gamma}
 -
2\exp(-c' \log(pq))
 $$
that the lasso error bounds~\eqref{eq:l2errorBdd} and~\eqref{eq:predErrBdd} hold with
 \begin{align*}
 \alphac &= \frac{1}{2} \lmin{\Sigma_{X}} \\
 \lambda_T&= 4 \mathbb{Q}(\mt{X},\mt{W},\bstar) \mathbb{R}(p,q,T)
 \end{align*}
 where
 \begin{align*}
 \mathbb{Q}(\mt{X},\mt{W},\bstar)
 &=
 C_2K ,
 \\
 \mathbb{R}(p,q,T)& =\sqrt{\frac{\log(pq)}{T} } .
  \end{align*}
\end{cor}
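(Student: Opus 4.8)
The plan is to combine the Master Theorem (Theorem~\ref{result:master}) with the two high-probability guarantees just established, namely the deviation bound (Proposition~\ref{Lemma:SubweibullDevBound}) and the restricted eigenvalue condition (Proposition~\ref{Lemma:SubweibullRE}). The corollary is essentially a bookkeeping exercise: we must verify that, on the intersection of the two good events, all hypotheses of Theorem~\ref{result:master} are met with the stated $\alphac$, $\tau$, and $\lambda_T$, and then read off the conclusions~\eqref{eq:l2errorBdd} and~\eqref{eq:predErrBdd}. First I would invoke Proposition~\ref{Lemma:SubweibullRE} to get, on an event of probability at least $1 - 2T\exp\{-\tilde c T^\gamma\}$, the lower RE condition for $\hat\Gamma = \mt{X}'\mt{X}/T$ with curvature $\alphac = \tfrac12\lmin{\Sigma_X}$ and tolerance $\tau = \tfrac{\alphac}{2\tilde c}\bigl(\log(p)/T^\gamma\bigr)$. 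Simultaneously, Proposition~\ref{Lemma:SubweibullDevBound} gives, on an event of probability at least $1 - 2\exp(-c'\log(pq))$, the deviation bound $\tfrac1T\vertiii{\mt{X}'\mt{W}}_\infty \le C_2 K \sqrt{\log(pq)/T}$, i.e.\ the DB condition holds with multiplier $\mathbb{Q} = C_2 K$ and rate $\mathbb{R}(p,q,T) = \sqrt{\log(pq)/T}$ (note $K$ here is the larger constant with $K_Y$ and $\vertiii{\bstar}$, which also dominates the $K$ used in the RE proposition, so a single $K$ suffices). A union bound gives the claimed overall success probability.

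The one genuine condition to check is the Master Theorem's requirement $\alphac \ge 32 s\tau$. Substituting $\tau = \tfrac{\alphac}{2\tilde c}\bigl(\log(p)/T^\gamma\bigr)$, this inequality is equivalent to $32 s \cdot \tfrac{1}{2\tilde c}\cdot\tfrac{\log(p)}{T^\gamma} \le 1$, i.e.\ $T^\gamma \ge 16 s \log(p)/\tilde c$, i.e.\ $T \ge \bigl(16 s\log(p)/\tilde c\bigr)^{1/\gamma}$. Recalling $\tilde c = \bigl(\lmin{\Sigma_X}\bigr)^\gamma/\bigl((54K)^\gamma\, 2C_1\bigr)$, one computes $\bigl(16 s\log(p)/\tilde c\bigr)^{1/\gamma} = \tfrac{54K}{\lmin{\Sigma_X}}\bigl(32\, C_1 s \log(p)\bigr)^{1/\gamma}$, which up to the numerical constant ($32 s$ versus $2\max\{8s/\tilde c, C_1\}$ — and indeed $2\cdot 8 s/\tilde c = 16 s/\tilde c \ge 16 s\log(p)/\tilde c$ is exactly the needed quantity after the $\log(p)$ factor is pulled out, matching the form in the stated sample-size lower bound) is precisely the middle term in the $\max$ defining the required $T$. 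The remaining two terms in that $\max$ simply reproduce the sample-size thresholds of Propositions~\ref{Lemma:SubweibullDevBound} and~\ref{Lemma:SubweibullRE} so that those results are applicable in the first place.

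Having verified $\alphac \ge 32 s\tau$ and chosen $\lambda_T = 4\mathbb{Q}\mathbb{R} = 4 C_2 K\sqrt{\log(pq)/T}$ (so that the hypothesis $\lambda_T \ge 4\mathbb{Q}(\mt{X},\mt{W},\bstar)\mathbb{R}(p,q,T)$ of Theorem~\ref{result:master} holds with equality), the conclusions~\eqref{eq:l2errorBdd} and~\eqref{eq:predErrBdd} follow immediately by direct substitution. I would then just restate these bounds, noting that $\vertii{\vect(\bhat-\bstar)} \le 4\sqrt{s}\lambda_T/\alphac$ gives the $O\bigl(\sqrt{s\log(pq)/T}\bigr)$ estimation rate and $\vertiii{(\bhat-\bstar)'\hat\Gamma(\bhat-\bstar)}_F^2 \le 32\lambda_T^2 s/\alphac$ the prediction-error rate. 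There is no real obstacle here; the only mild subtlety — the ``hard part'', such as it is — is checking that the sample-size conditions are genuinely consistent, i.e.\ that the $\max$ in the corollary's hypothesis simultaneously (i) makes both Propositions applicable and (ii) forces $\alphac \ge 32 s\tau$; this amounts to confirming that the middle term $\tfrac{54K}{\lmin{\Sigma_X}}\bigl[2\max\{8s/\tilde c, C_1\}\log(p)\bigr]^{1/\gamma}$ dominates $\bigl(16 s\log(p)/\tilde c\bigr)^{1/\gamma}$, which it does because $2\cdot 8 s/\tilde c \ge 16 s/\tilde c$ and the $(54K/\lmin{\Sigma_X})$ prefactor exactly accounts for the $\tilde c^{-1/\gamma}$ scaling.
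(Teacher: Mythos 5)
Your proposal is correct and matches the paper's intent exactly: the paper offers no separate proof of this corollary, stating only that substituting the RE and DB constants from Propositions~\ref{Lemma:SubweibullDevBound} and~\ref{Lemma:SubweibullRE} into Theorem~\ref{result:master} immediately yields the result, which is precisely the bookkeeping you carry out (union bound over the two good events, verification of $\alphac \ge 32s\tau$ via the middle term of the sample-size $\max$, and $\lambda_T = 4\mathbb{Q}\mathbb{R}$). The only loose point is the parenthetical comparing $16s/\tilde c$ with $16s\log(p)/\tilde c$ and the claim that the prefactor ``exactly'' accounts for the $\tilde c^{-1/\gamma}$ scaling --- in fact the corollary's middle term carries an extra harmless factor of $54K/\lmin{\Sigma_X}\ge 1$ beyond what $\alphac\ge 32s\tau$ strictly requires --- but this does not affect correctness.
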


\begin{rem}
The impact of mixing behavior is limited to the initial sample size and the probability with which the error bounds hold. The parameter error bound itself resembles the bounds obtained in the iid case
but with an additional multiplicative factor that depends on the ``effective condition number" $K/\lmin{\Sigma_{X}}$. 
\end{rem}

\subsection{Examples}
We explore applicability of our theory in Section~\ref{section:subweibull} beyond just linear Gaussian processes using the examples
 below. Together, these demonstrate that the high probabilistic guarantees for lasso cover cases of heavy tailed subweibull data, presence of model mis-specification, and/or nonlinearity.
\begin{exmp}[Subweibull VAR]\label{examp:subWeilVAR}

We study a generalization of the VAR, one that has subweibull($ \gamma_2 $) realizations. Consider a VAR($ 1 $) model defined as in Example \ref{ex:GaussVAR} except that we replace the Gaussian white noise innovations with iid random vectors from some subweibull($ \gamma_2 $) distribution with a non-singular covariance matrix $ \Sigma_{\epsilon} $. Now, consider a sequence $ (Z_t)_t $ generated according to the model. Then, each $ Z_t $ will be a mean zero subweibull random vector.

Now, we identify $X_t:=({Z}'_t,\cdots,{Z}'_{t-d+1})'$ and $Y_t :=Z_{t+d}$ for $t = 1,\ldots,T$.
Assuming that $\mt{A}_i$'s are sparse, 
\edit{
$ r(\mt{A}) <1 $,
}
we can verify (see Appendix~\ref{veri:VAR} for details) that Assumptions \ref{as:spars}-\ref{as:0mean} and \ref{assump:betamixing}-\ref{assump:gammaSmall} hold. Note that $\bstar = (\mt{A}_1,\ldots,\mt{A}_d)' \in \R^{dp \times p}$.
As a result, Propositions~\ref{Lemma:SubweibullDevBound} and \ref{Lemma:SubweibullRE} follow   and hence we have all the high probability guarantees for lasso on Example~\ref{examp:subWeilVAR}.
This shows that our theory covers DGMs beyond just the stable Gaussian processes.

%

\end{exmp}

\begin{exmp}[VAR with Subweibull Innovations and Omitted Variable]\label{ex:sGmisVAR}

Using the same setup as in Example \ref{ex:misVAR} except that we replace the Gaussian white noise innovations with iid random vectors from some subweibull($ \gamma_2 $) distribution with a non-singular covariance matrix $ \Sigma_{\epsilon} $. Now, consider a sequence $ (Z_t)_t $ generated according to the model. Then, each $ Z_t $ will be a mean zero subweibull random vector.



 Now,  set $ X_t := Z_t $ and $ Y_t :=Z_{t+1}$ for $ t=1,\ldots,T$. Assume \edit{$ r(\mt{A})<1 $}. It can be shown that $ (\bstar)' =\mt{A}_{Z Z }+\mt{A}_{Z \Xi } \Sigma_{\Xi Z }(0)(\Sigma_Z )^{-1} $.
 We can verify  that Assumptions \ref{as:sparse}-\ref{as:0mean} and \ref{assump:betamixing}-\ref{assump:gammaSmall} hold. See Appendix \ref{veri:misVAR} for details. Therefore, Propositions~\ref{Lemma:SubweibullDevBound} and~\ref{Lemma:SubweibullRE} and thus Corollary~\ref{cor:Subweibull} follow and hence we have all the high probabilistic guarantees for subweibull random vectors from a non-Markovian model. 
%

\end{exmp}

\begin{exmp}[Multivariate ARCH]\label{ex:ARCH}
We  explore the generality of our theory by considering a multivariate nonlinear time series model with subweibull innovations. 
A popular nonlinear multivariate time series model in econometrics and finance is the vector autoregressive conditionally heteroscedastic (ARCH) model. 
We choose the following specific ARCH model just for convenient validation of the geometric $ \beta $-mixing property of the process; it may potentially be applicable to a larger class of multivariate ARCH models. 

 Let $ (Z_t)_{t=1}^{T+1} $ be random vectors  defined by the following recursion, for any constants $ c>0 $, $ m\in (0,1) $, $a > 0$, and $ \mt{A} $ sparse
with \edit{$r(\mt{A}) < 1$}:
\begin{align} 
\begin{split} \label{eq:ARCH}
Z_t&= \mt{A} Z_{t-1}+\Sigma(Z_{t-1}) \mathcal{E}_t \\
\Sigma(\vc{z}) &:= c \cdot \clip{\vertii{\vc{z}}^{m}}{a}{b}\mt{I}_{p \times p}
\end{split}
\end{align}
where $ \mathcal{E}_t    $ are iid random vectors from some subweibull($ \gamma_2 $) distribution with a non-singular covariance matrix $ \Sigma_{\epsilon} $, and $\clip{x}{a}{b}$ clips the argument $x$ to stay in the interval $[a,b]$;
\edit{
i.e., 
$$
\clip{x}{a}{b}  = 
\begin{cases} 
b &\mbox{if } x \ge 0 \\ 
x & \mbox{if } a<x<b \\ 
a & \mbox{otherwise. } 
\end{cases}
$$}

Consequently, each $ Z_t $ will be a mean zero subweibull random vector. Note that $ \Theta^\ast = \mt{A}' $, the transpose of the coefficient matrix $ \mt{A} $ here.

 Now,  set $ X_t := Z_t $ and $ Y_t =Z_{t+1}$ for $ t=1,\ldots,T$.  We can verify (see Appendix \ref{veri:ARCH} for details) that Assumptions \ref{as:sparse}-\ref{as:0mean} and \ref{assump:betamixing}-\ref{assump:gammaSmall} hold. Therefore, Propositions~\ref{Lemma:SubweibullDevBound} and~\ref{Lemma:SubweibullRE}, and thus Corollary~\ref{cor:Subweibull} follow and hence we have all the high probabilistic guarantees for lasso for a nonlinear models with subweibull innovations. \edit{Our example is admittedly contrived, but we hope that our techniques and results will allow other researchers to consider more compelling non-linear models.}

%
%
\end{exmp}


\section{Simulations}\label{sect:sim}

\edit{In this section we report simulation results to study the effect of heavy tails and temporal dependence on the estimation error of lasso.}

\subsection{\edit{Effect of Heavy-Tailedness}}\label{experiment:longtailed}
\edit{We conducted a simulation experiment to investigate the effect of heavy-tailedness via a subweibull VAR (Example~\ref{examp:subWeilVAR}).
Consider a standard VAR Model
$$
X_{t+1} = \mt{A}X_t + c\epsilon_t
$$
where the underlying parameter matrix $\mt{A}$ is an $s$-sparse $p \times p$ matrix with spectral radius $c$, $X_t$ is a $p \times 1$ vector, and $\epsilon_t$ is a subweibull random vector where each entry is iid Weibull random variable with shape parameter $\alpha$ and scale parameter 1. Moreover, $\epsilon_t$ is independent across time. For the simulation, $\mt{A}$ is generated by first randomly choosing $s$ positions with non-zero entries and then sampling each non-zero entry iid from Uniform(0,1). Finally, $\mt{A}$ is rescaled so that its spectral radius is $c = 0.5$. Now, let $s = \sqrt{p}$, $p \in \{50,100,150,200\}$, $\alpha \in \{0.4,0.5,1.0,1.9\}$, and $T = m \times s\log(p)$ where multiples $m \in \{1, 3, 5, 7, 9, 11, 13, 15, 17, 19\}$. The average estimation error (Frobenius norm of the difference between true parameter matrix $\mt{A}$ and its estimated counterpart $\hat{\mt{A}}$) over 10 repetitions plotted against $\sqrt{\frac{s \log(p)}{T}}$ is shown in Figure~\ref{fig:shape}.}

\begin{center}
\begin{figure}[h]\label{fig:shape}
\includegraphics[width=0.8\textwidth]{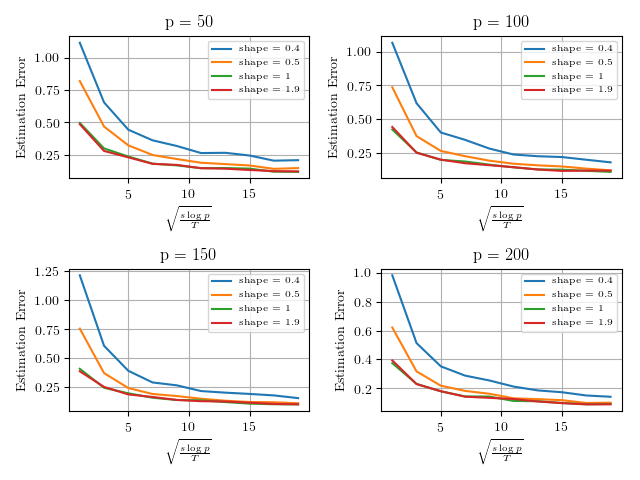}
\caption{Effect of Heavy Tails on Lasso Estimation Error. A smaller shape parameter corresponds to heavier tails of the noise.}
\end{figure}
\end{center}

\edit{The relation between shape parameter and the estimation error is quite clear. Smaller shape parameter means a heavier tail, resulting in larger estimation error, which is indeed what we observe here. The unequal spacing in the choice of shape parameter is due to the fact that the relation of shape parameter and estimation error is highly non-linear, and using equal spacing would make the differences between plots less easy to see.}


\subsection{\edit{Effect of Dependence}}
\edit{Next we set up a simulation to study the effect of dependence on lasso estimation error. At time $0$, we set
 $X_0 \sim \mathcal{N}(0, I_{p \times p}),\, Y_0 = \mt{A} X_0 + c\epsilon_0$. For $t \ge 1$, with probability  $\rho$, $ (X_t, Y_t) $ is just a copy of the previous observations, i.e., $Y_t = Y_{t-1},\, X_t = X_{t-1}$. With probability $1-\rho$, $ (X_t, \epsilon_t) $ are fresh independent samples, $X_t  \sim \mathcal{N}(0, I_{p \times p}), \,Y_t  = \mt{A} X_t +  c\epsilon_t$. The settings of $\mt{A}$ and $\epsilon_t$ are exactly the same as above in Subsection~\ref{experiment:longtailed}. We fix shape parameter $\alpha = 1$. Now, let $s = \sqrt{p}$, $p \in \{50,100,150,200\}$, $\rho \in \{0.2,0.4,0.6,0.8\}$, and $T = m \times s\log(p)$ where $m \in \{1, 3, 5, 7, 9, 11, 13, 15, 17, 19 \}$. The average estimation error over 10 repetitions plotted against $\sqrt{\frac{s \log(p)}{T}}$ is shown in Figure~\ref{fig:mixing}. For all choices of the dimension $p$, the results confirm the intuition that higher $\rho$ leads to higher estimation error as higher $\rho$ implies more dependence in data.}

\begin{center}
\begin{figure}[h]\label{fig:mixing}
\includegraphics[width=0.8\textwidth]{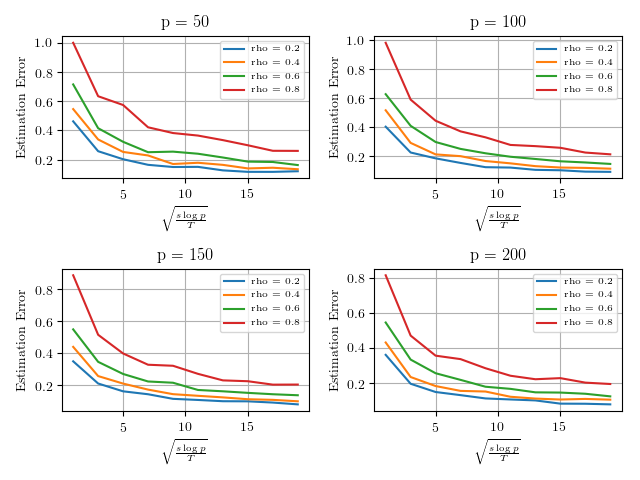}
\caption{Effect of Dependence on Lasso Estimation Error. A larger $\rho$ parameter corresponds to more dependence in the generating process.}
\end{figure}
\end{center}

\subsection*{Acknowledgments}
We thank Sumanta Basu and George Michailidis for helpful discussions, and Roman Vershynin for pointers to the literature. We acknowledge the support of NSF via a regular (DMS-1612549) and a CAREER grant (IIS-1452099).

\bibliographystyle{plain}
\bibliography{myBib,time_series_grant}

\appendix

\section{Proof of Master Theorem}\label{sec:masterproof}

\begin{proof}[Proof of Theorem \ref{result:master}] 

\edit{
The proof follows from optimality of $\bhat$ and the definitions of the RE and DB conditions. 
}

\begin{enumerate}
\edit{
\item 
Since $\bhat$ is optimal for optimization problem~\eqref{eqn:bhat} and $\bstar$ is feasible,
\begin{equation} \label{main:1}
\frac{1}{T} \vertiii{ \mt{Y}-\mt{X}\bhat}_F^2 + \lambda_T \vertii{\vect(\bhat)}_1    \le  
\frac{1}{T} \vertiii{ \mt{Y}-\mt{X}\bstar}_F^2 + \lambda_T \vertii{\vect(\bstar)}_1
\end{equation} 
\item 
Let $\hat{\Delta}:=\bhat-\bstar \in \R^{p\times q}  $. Then, rearranging the inequality~\eqref{main:1} above yields 
\begin{equation} \label{main:2} 
\frac{1}{T}\vertiii{\mt{X}\hat{\Delta}}_F^2 \le \frac{2}{T} \mathrm{tr}(\hat{\Delta}' \mt{X}'\mt{W} ) +\lambda_T\left( \vertii{\vect(\bstar)}_1 - \vertii{\vect(\bhat)}_1\right)
\end{equation}
\item 
Let $S$ denote the support of $\vc{\bstar}$. For any matrix $ \mt{M} $, let $\mt{M}_S$ be the components of $\mt{M} $ restricted to the support $ S $ and similarly for $\mt{M}_{S^C}$; therefore $ \vc{\bstar}= \vc{\bstar}_S + \vc{\bstar}_{S^C} $. With these, note that
\begin{align*}
\vertii{\vect(\bstar+\hat{\Delta})}_1 -\vertii{\vect(\bstar)}_1 \ge &
\{\vertii{\vect(\bstar_S)}_1 - \vertii{\vect(\hat{\Delta}_S)}_1 \} \\
&+ \vertii{\vect(\hat{\Delta}_{S^c})}_1- \vertii{\vect(\bstar)}_1 \\
&=\vertii{\vect(\hat{\Delta}_{S^c})}_1 - \vertii{\vect(\hat{\Delta}_{S})}_1
\end{align*}
\item \label{main:3}
Recall the $RE$ condition with parameters $\alpha$ and $\tau$, and deviation bound condition with constant $\mathbb{Q}(\Sigma_X, \Sigma_W)$. For tuning parameter $\lambda_T\ge 2\mathbb{Q}(\Sigma_X, \Sigma_W)\sqrt{\frac{\log(q)}{T}} $, we have 
\begin{align*}
\alpha \vertiii{\hat{\Delta}}_F^2  -&\tau\Vert \vect(\hat{\Delta} )\Vert_1^2 
\\ &\overset{RE}{\le} 
\frac{1}{T}\vertiii{\mt{X}\Delta }_F^2 
\\&\overset{\eqref{main:2}}{\le }
\frac{2}{T}\mathrm{tr}(\hat{\Delta}' \mt{X}'\mt{W})+  \lambda_T\{\vertii{\vect(\hat{\Delta}_{S})}_1 - \vertii{\vect(\hat{\Delta}_{S^c})}_1\}
 \\&\le 
\frac{2}{T}\sum_{k=1}^{q}\Vert\hat{\Delta}_{:k} \Vert_1 \Vert( \mt{X}'\mt{W})_{:k} \Vert_{\infty} + \lambda_T\{\vertii{\vect(\hat{\Delta}_{S})}_1 - \vertii{\vect(\hat{\Delta}_{S^c})}_1\}
 \\&\le
\frac{2}{T}\Vert\vect(\hat{\Delta}) \Vert_1 \vertiii{\mt{X}'\mt{W} }_{\infty} + \lambda_T\{\vertii{\vect(\hat{\Delta}_{S})}_1 - \vertii{\vect(\hat{\Delta}_{S^c})}_1\}
\\&\overset{DB}{\le}
2\Vert\vect(\hat{\Delta}) \Vert_1\mathbb{Q}(\Sigma_X, \Sigma_W)\mathbb{R}(p,q,T) + \lambda_T\{\vertii{\vect(\hat{\Delta}_{S})}_1 - \vertii{\vect(\hat{\Delta}_{S^c})}_1\}
\\&\le
\Vert\vect(\hat{\Delta}) \Vert_1\lambda_N / 2+ \lambda_T\{\vertii{\vect(\hat{\Delta}_{S})}_1 - \vertii{\vect(\hat{\Delta}_{S^c})}_1\}
\\&\le
\frac{3\lambda_T}{2}\vertii{\vect(\hat{\Delta}_{S})}_1 - \frac{\lambda_T}{2}\vertii{\vect(\hat{\Delta}_{S^c})}_1
\\&\le
2\lambda_T \vertii{\vect(\hat{\Delta})}_1
\end{align*}
\item  \label{main:4}
In particular, this says that 
$
3 \vertii{\vect(\hat{\Delta}_{S})}_1 \ge \vertii{\vect(\hat{\Delta}_{S^c})}_1
$\\
So, we have a lower bound 
 $$\vertii{\vect(\hat{\Delta})}_1\le 4 \vertii{\vect(\hat{\Delta}_S)}_1 \le 4\sqrt{s}\vertii{\vect(\hat{\Delta})}$$
\item \label{upperbound}
Finally, with $\alpha \ge 32s\tau$,
\begin{align*}
\frac{\alpha	}{2}\vertii{\vect(\hat{\Delta} )}_2^2 &\le
( \alpha-16s\tau)\vertii{\vect(\hat{\Delta} )}_2^2  \\
&\le  \alpha \vertii{\vect(\hat{\Delta} )}_2^2  -\tau\Vert \vect(\hat{\Delta} )\Vert_1^2 \\
 &\le 2\lambda_T\Vert \vect(\hat{\Delta} )\Vert_1 \\
 &\le 2\sqrt{s}\lambda_T\Vert \hat{\Delta} \Vert_2
\end{align*}
Thus, we have the upper bound
$$
\vertii{\vect(\hat{\Delta} )}_2 \le \frac{4\lambda_T \sqrt{s}}{\alpha}
$$
\item 
From steps~\eqref{main:3} and~\eqref{main:4}, we have 
\begin{align*}
\frac{1}{T}\vertiii{\mt{X}\hat{\Delta}}_F^2 
\le
8\lambda_T \sqrt{s} \vertii{\vect(\hat{\Delta})}_2
\end{align*}
Together with step \eqref{upperbound}, we obtain that
\begin{align*}
\frac{1}{T}\vertiii{\mt{X}\hat{\Delta}}_F^2 
\le
8\lambda_T \sqrt{s} \vertii{\vect(\hat{\Delta})}_2
\le
32\lambda_T^2 s /\alpha
\end{align*}
}
\end{enumerate}
\end{proof}

%

\section{Proofs for Gaussian Processes under $\alpha$-Mixing}\label{proof:gauss}



We will also need the following result to control operator norms of matrices in terms of $\ell_1$ norms of the rows and columns.

\begin{fact}[Schur Test]\label{fact:schurTest}
For any matrix $\mt{M}$, we have
\[
\vertiii{ \mt{M} }^2 \le \max_{ i } \vertii{\mt{M}_{i:}}_1 \cdot \max_{ j } \vertii{\mt{M}_{:j}}_1 .
\]
Therefore, for any \emph{symmetric} matrix $ \mt{M} \in \R^{n \times n} $, $ \vertiii{\mt{M}}\le \max_{1\le i\le n} \vertii{\mt{M}_{i:}}_1$.
\end{fact}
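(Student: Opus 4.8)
The plan is to bound the operator norm directly through its variational characterization $\vertiii{\mt{M}} = \sup_{\vertii{x}_2 = 1} \vertii{\mt{M}x}_2$ together with a single application of the Cauchy--Schwarz inequality. Write $R := \max_i \vertii{\mt{M}_{i:}}_1$ and $C := \max_j \vertii{\mt{M}_{:j}}_1$ for the maximal absolute row and column sums, so that the target inequality reads $\vertiii{\mt{M}}^2 \le R\,C$. Fix any vector $x$ with $\vertii{x}_2 = 1$.

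First I would expand $\vertii{\mt{M}x}_2^2 = \sum_i \bigl(\sum_j M_{ij} x_j\bigr)^2$ and, inside each squared term, split the summand as $M_{ij} x_j = \sqrt{|M_{ij}|}\,\cdot\,\bigl(\sqrt{|M_{ij}|}\,\mathrm{sgn}(M_{ij})\,x_j\bigr)$. Cauchy--Schwarz then gives $\bigl(\sum_j M_{ij} x_j\bigr)^2 \le \bigl(\sum_j |M_{ij}|\bigr)\bigl(\sum_j |M_{ij}|\,x_j^2\bigr) \le R \sum_j |M_{ij}|\,x_j^2$. Summing over $i$ and swapping the order of summation yields $\vertii{\mt{M}x}_2^2 \le R \sum_j x_j^2 \sum_i |M_{ij}| \le R\,C \sum_j x_j^2 = R\,C$. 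Taking the supremum over unit $x$ gives $\vertiii{\mt{M}}^2 \le R\,C$, the claimed bound. For the symmetric addendum I would just observe that when $\mt{M}$ is symmetric its $i$th row and $i$th column carry the same absolute entries, hence $R = C$ and the bound collapses to $\vertiii{\mt{M}} \le R = \max_i \vertii{\mt{M}_{i:}}_1$.

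There is no genuine obstacle here; the result is classical. The only point requiring a little care is the bookkeeping in the Cauchy--Schwarz step, namely choosing the split of $M_{ij} x_j$ so that one factor sums to a row sum while the other retains the $x_j^2$ weights, and then the harmless degenerate case of an all-zero row or column, for which the corresponding contribution simply vanishes.
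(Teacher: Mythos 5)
Your proof is correct. The paper states this Schur test as a \emph{Fact} without supplying any proof, so there is nothing to compare against; your argument --- the standard one, splitting $M_{ij}x_j = \sqrt{|M_{ij}|}\cdot\bigl(\sqrt{|M_{ij}|}\,\mathrm{sgn}(M_{ij})x_j\bigr)$, applying Cauchy--Schwarz row by row, and swapping the order of summation to pick up the maximal column sum --- is complete, and the symmetric specialization via $R=C$ is immediate. The only cosmetic remark is that you implicitly use $\vertiii{\mt{M}} = \sup_{\vertii{x}_2=1}\vertii{\mt{M}x}_2$, which agrees with the paper's definition $\sqrt{\lambda_{\max}(\mt{M}'\mt{M})}$, so no gap there.
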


\begin{claim}\label{claim:crossCovBdd}
For any random vectors $ X\in \R^n  $ and $ Y\in \R^n  $, we have 
\begin{align*}
\vertiii{\E\bbra{XY'}} = \vertiii{\E\bbra{YX'}}  \le \frac{\vertiii{\Sigma_{X}}+\vertiii{\Sigma_{Y}}}{2}
\end{align*}

\end{claim}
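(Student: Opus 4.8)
The plan is to reduce the operator-norm bound to a one-dimensional inequality via test vectors and then invoke the elementary estimate $ab \le \half(a^2+b^2)$. The equality $\vertiii{\E\bbra{XY'}} = \vertiii{\E\bbra{YX'}}$ is immediate, since $\E\bbra{YX'} = \bpar{\E\bbra{XY'}}'$ and the operator norm is invariant under transposition.

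For the inequality, set $\mt{M} := \E\bbra{XY'} \in \R^{n\times n}$. By the variational characterization of the operator norm, $\vertiii{\mt{M}} = \sup_{u,v \in S^{n-1}} \verti{u'\mt{M}v}$, where $S^{n-1}$ is the unit sphere in $\R^n$. For fixed unit vectors $u,v$, linearity of expectation gives $u'\mt{M}v = \E\bbra{(u'X)(v'Y)}$, so applying $\verti{\E Z} \le \E\verti{Z}$ and then $ab \le \half(a^2+b^2)$,
\[
\verti{u'\mt{M}v} \le \E\verti{(u'X)(v'Y)} \le \half \E\bbra{(u'X)^2 + (v'Y)^2} = \half\bpar{ u'\Sigma_X u + v'\Sigma_Y v } .
\]
Since $\Sigma_X$ and $\Sigma_Y$ are symmetric positive semidefinite, $u'\Sigma_X u \le \vertiii{\Sigma_X}$ and $v'\Sigma_Y v \le \vertiii{\Sigma_Y}$ for all unit vectors $u,v$; as the right-hand side above no longer depends on $u,v$, taking the supremum over $u,v \in S^{n-1}$ yields $\vertiii{\mt{M}} \le \half\bpar{\vertiii{\Sigma_X} + \vertiii{\Sigma_Y}}$, which is the claim.

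There is essentially no obstacle here: the argument is a two-line computation, and the only point worth stating carefully is that the bound $\half(\vertiii{\Sigma_X}+\vertiii{\Sigma_Y})$ is uniform in the test vectors, so it survives passing to the supremum. (One could instead apply the Cauchy--Schwarz inequality to $\E\bbra{(u'X)(v'Y)}$ to obtain the sharper bound $\vertiii{\mt{M}} \le \sqrt{\vertiii{\Sigma_X}\,\vertiii{\Sigma_Y}}$, but the stated arithmetic-mean form is all that is needed in the deviation bound argument where this claim is used.)
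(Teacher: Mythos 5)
Your proof is correct and follows essentially the same route as the paper: reduce the operator norm to the bilinear form $\E[(u'X)(v'Y)]$ over unit test vectors and bound it by the quadratic forms of $\Sigma_X$ and $\Sigma_Y$. The only cosmetic difference is that you apply the pointwise inequality $ab \le \tfrac{1}{2}(a^2+b^2)$ inside the expectation, while the paper applies the Cauchy--Schwarz inequality first (obtaining the geometric-mean bound you mention in your parenthetical) and then the AM--GM inequality.
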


\begin{proof}
We have,
\begin{align*}
\vertiii{\E\bbra{XY'}} &=\vertiii{\E\bbra{YX'}}  & &\\
						&:= \sup_{\vertii{u}\le 1,\, \vertii{v}\le 1}\E \bbra{ u'YX'v }\\
						&=  \sup_{\vertii{u}\le 1,\, \vertii{v}\le 1}\E \bbra{ (Y'u)(X'v) }\\
						&\le  \sup_{\vertii{u}\le 1,\, \vertii{v}\le 1} \sqrt{\E\bbra{(Y'u)^2}}\sqrt{\E\bbra{(X'v)^2}}
										&\text{by Cauchy–Schwarz ineq.} \\
						&= \sup_{\vertii{u}\le 1} \sqrt{\E\bbra{(Y'u)^2}}  \sup_{\vertii{v}\le 1} \sqrt{\E\bbra{(X'v)^2}} \\
						&=\sqrt{\vertiii{\E\bbra{XX'}}}\sqrt{\vertiii{\E\bbra{YY'}}}\\
						&\le \frac{\vertiii{\E\bbra{XX'}}+\vertiii{\E\bbra{YY'}}}{2} . \hfill \qedhere
\end{align*}
\end{proof}

The proof of Proposition \ref{result:rhoRE} relies on the following result.
\begin{lm}\label{result:lemma}
For a second order stationary $\rho$-mixing sequence of random vectors $ \{X_t\}_t $, their $l$-th auto-covariance matrix can be bounded as follows:
$$
\vertiii{\Sigma_{{X}}(l)}  \le \rho(l) \vertiii{\Sigma_{{X}}(0) }, \,\forall \, l \in \mathbb{Z}.
$$
\end{lm}
\begin{proof}
\edit{
Recall the definition of $\rho$-mixing. For random vectors $X$ and $Y$ on the probability space $(\Omega,\mathcal{F},\mathcal{P})$, let $\mathcal{A}:= \sigma(X)$ and $\mathcal{B}:= \sigma(Y)$, and $\mathcal{L}^2(\mathcal{C})$ denote the space of square-integrable, $ C $-measurable (real-valued) random variables.
\begin{align*}
	\rho(X,Y)	&:=\sup \{ \mathrm{cor}( f(X),g(Y) )\, | \,\,f \in \mathcal{L}^2(\mathcal{A}), g \in \mathcal{L}^2(\mathcal{B}) \} 
\end{align*}
Then, in particular, we consider the one-dimensional projections and obtain that
\begin{align*}
	\rho(X,Y)					
&\ge \mathrm{cor}(u'X, \, v'Y) 		&&\forall \text{ fixed }\, u,\, v \\
				&=\frac{\vert \mathbb{E}[u'Xv'Y]  \vert}{\sqrt{\mathbb{E}(u'X)^2 \mathbb{E}(v'Y)^2} }\; &&u,\, v\, \text{non-zero} \\
\end{align*}
Re-arranging, $\forall u,\, v$ fixed,
\begin{align*}
\vert u' \mathbb{E}[XY'] v \vert & \le \rho(X,Y) \sqrt{\mathbb{E}(u'X)^2} \sqrt{\mathbb{E}(v'Y)^2} \\
\sup_{u,v} \vert u' \mathbb{E}[XY'] v \vert & \le \rho(X,Y) \sqrt{\sup_u \mathbb{E}(u'X)^2} \sqrt{\sup_v \mathbb{E}(v'Y)^2}
\end{align*}
}
But, 
$$
\vertiii{\mathbb{E}[XY']}  \equiv \sup_{u,v} \vert u' \mathbb{E}[XY'] v \vert  \le \rho(X,Y) \sqrt{\sup_u \mathbb{E}(u'X)^2} \sqrt{\sup_v \mathbb{E}(v'Y)^2}
$$
For a stationary time series $\{X_t \}$, recall that $\forall t,l$
$$
\Sigma_X(l)  := \E[ X_t X_{t+l}'].
$$
By stationarity, $\forall t,l$
$$
\rho(X_t, X_{t+l}) = \rho(l)  .
$$
Hence,
$$
\vertiii{  \Sigma_X(l) }\le \rho(l) \vertiii{\Sigma_X(0) } . \hfill\qedhere
$$
\end{proof}


\begin{proof}[Proof of Proposition \ref{result:rhoDB}]

Note that by Fact \ref{fact:mixingEquiv} $ \alpha $ and $ \rho$-mixing are equivalent for stationary Gaussian processes. The proof will operate via arguments in $ \rho $-mixing coefficients. \\

Recall $\vertiii{\mt{X}'\mt{W} }_{\infty}= \max_{1\le i \le p,1\le j \le q } | [\mt{X}'\mt{W}]_{i,j} |= 
\max_{1\le i \le p,1\le j \le q }\verti{ \mt{X}_{:i}'\mt{W}_{:j}}$. \\

By Assumption \eqref{as:0mean}, we have
\begin{align*}
&\mathbb{E} {\mt{X}_{:i}}=\vc{0},\forall i \;\;\;\text{and} \\
& 
 \mathbb{E} {\mt{Y}_{:j}}=\vc{0},\forall j  
\end{align*}

By first order optimality of the optimization problem in \eqref{eqn:bstar}, we have
$$
\mathbb{E} \mt{X}'_{:i}(\mt{Y}-\mt{X}\bstar)=\vc{0},\forall i \Rightarrow 
\mathbb{E} {\mt{X}_{:i}}'\mt{W}_{:j}=0,\forall i,j 
$$
We know $\forall i,j $
\begin{equation}\label{eq:splitSquare}
\begin{aligned}
\verti{\mt{X}_{:i}'\mt{W}_{:j}} &= \verti{\mt{X}_{:i}'\mt{W}_{:j} - \E[ \mt{X}_{:i}'\mt{W}_{:j} ]}  \\
&=\frac{1}{2}\left| \left(\Vert \mt{X}_{:i}+ \mt{W}_{:j} \Vert^2 - \E[\Vert \mt{X}_{:i}+ \mt{W}_{:j} \Vert^2 ] \right) \right.\\
&\quad \left. - \left( \Vert \mt{X}_{:i} \Vert^2 - \E[\Vert \mt{X}_{:i} \Vert^2 ] \right) - \left( \Vert \mt{W}_{:j} \Vert^2 -\E[ \Vert \mt{W}_{:j} \Vert^2] \right) \right| \\
&\le \half \verti{  \Vert \mt{X}_{:i}+ \mt{W}_{:j} \Vert^2 - \E[\Vert \mt{X}_{:i}+ \mt{W}_{:j} \Vert^2 ]  } \\
&\quad + \half \verti{  \Vert \mt{X}_{:i} \Vert^2 - \E[\Vert \mt{X}_{:i} \Vert^2 ] } + \half \verti{  \Vert \mt{W}_{:j} \Vert^2 -\E[ \Vert \mt{W}_{:j} \Vert^2]  } .
\end{aligned}
\end{equation}
Therefore,
\begin{align*}
&\quad \prob \bpar{ \frac{1}{T} \verti{\mt{X}_{:i}'\mt{W}_{:j}} > 3t } \\
&\le \prob \bpar{ \frac{1}{2T} \verti{  \Vert \mt{X}_{:i}+ \mt{W}_{:j} \Vert^2 - \E[\Vert \mt{X}_{:i}+ \mt{W}_{:j} \Vert^2 ]  } > t }
+ \prob \bpar{ \frac{1}{2T} \verti{  \Vert \mt{X}_{:i} \Vert^2 - \E[\Vert \mt{X}_{:i} \Vert^2 ] } > t } \\
&\quad + \prob \bpar{ \frac{1}{2T} \verti{  \Vert \mt{W}_{:j} \Vert^2 -\E[ \Vert \mt{W}_{:j} \Vert^2] } > t } .
\end{align*}
This suggests a proof strategy where we control each of the three tail probabilities above. Assuming the conditions in Proposition~\ref{result:rhoDB}, we  can apply the Hanson-Wright inequality (Lemma \ref{thm:hanson}) on each of them because we know that

$ \forall i,j $
$$
\mt{X}_{:i} \sim N(0, \Sigma_{\mt{X}_{:i}}),\; i=\wunai p,\; \text{and}
$$ 

$$
\mt{W}_{:j}:= \mt{Y}_{:j}-[\mt{X}\bstar]_{:j} \sim  N(\vc{0}, \Sigma_{\mt{W}_{:j}}),\; j=\wunai q
$$
since both $\{\vc{X}_t\}_{t=1}^T$ and $\{\vc{Y}_t\}_{t=1}^T$ are centered Gaussian vectors. 

So, 
$$
\mt{X}_{:i}+  \mt{W}_{:j}\sim N(0, \Sigma_{\mt{X}_{:i}} + \Sigma_{\mt{W}_{:j}} + \Sigma_{\mt{W}_{:j},\, \mt{X}_{:i}} + \Sigma_{\mt{X}_{:i},\mt{W}_{:j}})
$$
We are ready to apply the tail bound on each term on the RHS of \eqref{eq:splitSquare}.
By Lemma \eqref{thm:hanson}, $\exists $ constant $c>0$ such that $\forall t\ge 0$
\begin{align*}
\frac{\Vert \mt{X}_{:i}\Vert^2  - \mathbb{E}\Vert \mt{X}_{:i} \Vert^2}{T\vertiii{\Sigma_{ \mt{X}_{:i} }}} &\le t   &\text{w.p. at least }1-2\exp(-cT\min\{t,t^2\}) \\
\frac{\Vert  \mt{W}_{:j}\Vert^2  - \mathbb{E}\Vert \mt{W}_{:j} \Vert^2}{T\vertiii{\Sigma_{\mt{W}_{:j} }}} &\le t   &\text{w.p. at least }1-2\exp(-cT\min\{t,t^2\}) \\
\frac{\Vert \mt{X}_{:i}+ \mt{W}_{:j}\Vert^2  - \mathbb{E}\Vert \mt{X}_{:i} + \mt{W}_{:j}\Vert^2}{T\vertiii{\Sigma_{\mt{X}_{:i}+\mt{W}_{:j}} }} &\le t   &\text{w.p. at least }1-2\exp(-cT\min\{t,t^2\}) \end{align*}

With Claim \ref{claim:crossCovBdd}, the third inequality implies, for some $\tilde{c}>0$, that
  w.p. at least $$
  1-8\exp(-\tilde{c}T\min\{t,t^2\})
  $$
  the following holds
$$
\frac{\Vert \mt{X}_{:i}+ \mt{W}_{:j}\Vert^2  - \mathbb{E}\Vert \mt{X}_{:i} + \mt{W}_{:j} \Vert^2}
{2T (\vertiii{ \Sigma_{ \mt{X}_{:i}}  } + \vertiii{\Sigma_{\mt{W}_{:j}} })}\le \frac{\Vert \mt{X}_{:i}+ \mt{W}_{:j}\Vert^2  - \mathbb{E}\Vert \mt{X}_{:i} + \mt{W}_{:j}\Vert^2}{T\vertiii{\Sigma_{\mt{X}_{:i}+\mt{W}_{:j}} }} \le   t   
$$
Therefore,
$$
\frac{\mt{X}_{:i}'\mt{W}_{:j}}{3T (\vertiii{ \Sigma_{ \mt{X}_{:i}}  } +\vertiii{   \Sigma_{\mt{W}_{:j}} })}
\le  3 t   \text{        w.p. at least }1-8\exp(-\tilde{c}T\min\{t,t^2\}) 
$$
Appealing to the union bound over all $i \in [1 \cdots p]$ and $j \in [1 \cdots q]$, for any $ \Delta $
$$
\mathbb{P}[\max_{1\le i\le p, 1\le j\le q} \mt{X}_{:i}'\mt{W}_{:j} \ge \Delta] \le p q\mathbb{P}[	\mt{X}_{:i}'\mt{W}_{:j}\ge \Delta]\\
$$

We can conclude that
$$
\mathbb{P}[\max_{1\le i\le p, 1\le j\le q}\frac{  \mt{X}_{:i}'\mt{W}_{:j}}{3T (\vertiii{ \Sigma_{ \mt{X}_{:i}}  } +\vertiii{   \Sigma_{\mt{W}_{:j}} })} \le 3t]
 \ge 1-8pq\exp(-\tilde{c}T\min\{t,t^2\})
$$
Now, for a free parameter \black{$b>0$}, choose \black{ $t=\sqrt{\frac{(b+1)\log(pq)}{\tilde{c}T}}$}, for $\black{T\ge \frac{(b+1)\log({pq})}{\tilde{c}}}$ we have
\begin{align*}
\mathbb{P}&\left[ \vertiii{\frac{\mt{X}'\mt{W}}{T} }_{\infty} 
\le 
 \sqrt{\frac{(b+1)\log(pq)}{\tilde{c}T}}
\max_{1\le i\le p, 1\le j\le q}(\vertiii{ \Sigma_{ \mt{X}_{:i}}  } +\vertiii{   \Sigma_{\mt{W}_{:j}} })  \right] \\
&\qquad  \ge 
1-8 
\exp[-b\log(pq)]
\end{align*}

Let us find out what $\vertiii{   \Sigma_{\mt{W}_{:j}} }$ and $\vertiii{   \Sigma_{\mt{X}_{:i}} }$ are. Recall $ \mt{W}=\mt{Y}-\mt{X}\bstar $. So, 
\begin{eqnarray} 
\Sigma_{\mt{W}_{:i}} = \Sigma_{\mt{Y}_{:i}} + \Sigma_{\mt{X} \bstar_{:i}} + \Sigma_{\mt{Y}_{:i},\,\mt{X} \bstar_{:i}}
+\Sigma_{\mt{X} \bstar_{:i},\,\mt{Y}_{:i}}
\end{eqnarray}
By Claim \ref{claim:crossCovBdd}, we have
\begin{equation}\label{ineq:sigmaW}
\vertiii{\Sigma_{\mt{W}_{:i}}} \le 2\vertiii{\Sigma_{\mt{Y}_{:i}}} + 2\vertiii{\Sigma_{\mt{X} \bstar_{:i}}}  
\end{equation}
Let us figure out each of the summands on the RHS of equation \eqref{ineq:sigmaW} above. 
\begin{align*}
 \Sigma_{\mt{X} \bstar_{:i}} [l,k]  &:= \E \bbra{(\mt{X}\bstar_{:i} )(\mt{X}\bstar_{:i})'}[l,k]\\
&= \E \bbra{e_l'(\mt{X}\bstar_{:i} )(\mt{X}\bstar_{:i})'e_k} \\
&=\E  \bbra{(\mt{X}_{l,:}'\bstar_{:i} )((\bstar_{:i})' \mt{X}_{k,:} )} \\
&= \E  \bbra{(\bstar_{:i} )'\mt{X}_{l,:}  \mt{X}_{k,:}' \bstar_{:i}} \\
&= (\bstar_{:i}) '   \bbra{\E \mt{X}_{l,:}  \mt{X}_{k,:}' }\bstar_{:i}
\end{align*}
With the equality above,
\begin{align*}
\vertiii{\Sigma_{\mt{X} \bstar_{:i}} } &\le \max_{1\le k \le T}\vertii{(\Sigma_{\mt{X} \bstar_{:i}})[{k,:}]}_1 & &\text{by Fact \ref{fact:schurTest}}\\
&\le 2\sum_{l=0}^{T}\rho(l)\vertii{\bstar_{:i}}_2^2 \vertiii{\Sigma_{{X}}(0)} &&\text{by Lemma \ref{result:lemma}}
\end{align*}
Therefore,
$$
\max_{1\le i \le p}\vertiii{\Sigma_{\mt{X} \bstar_{:i}} } \le  \max_{1\le i \le p}2\sum_{l=0}^{T}\rho(l)\vertii{\bstar_{:i}}_2^2 \vertiii{\Sigma_{{X}}(0)}
=
2\vertiii{\Sigma_{{X}}(0)}\sum_{l=0}^{T}\rho(l)\max_{1\le i \le p}\vertii{\bstar_{:i}}_2^2 .
$$
Similarly,
$$
\max_{1\le i \le p}\vertiii{\Sigma_{\mt{Y}_{:i}} } 
\le
2\vertiii{\Sigma_{{Y}}(0)}\sum_{l=0}^{T}\rho(l)
$$
and
$$
\max_{1\le i \le p}\vertiii{\Sigma_{\mt{X}_{:i}} } 
\le
2\vertiii{\Sigma_{{X}}(0)}\sum_{l=0}^{T}\rho(l) .
$$
So, by inequality \eqref{ineq:sigmaW} 
\begin{align*}
\max_{1\le i \le q} 
\vertiii{ \Sigma_{\vc{W}_{:i}} }
 \le
4\sum_{l=0}^{T}\rho(l)\bpar{\vertiii{\Sigma_{{X}}}\max_{1\le i \le p} \vertii{\bstar_{:i}}_2^2 + \vertiii{\Sigma_{{Y}} } } .
\end{align*}

Therefore, 
\begin{align*}
\max_{1\le i\le p, 1\le j\le q}(\vertiii{ \Sigma_{ \mt{X}_{:i}}  } +\vertiii{   \Sigma_{\mt{W}_{:j}} }) 
\le
4\sum_{l=0}^{T}\rho(l) \bpar{
\vertiii{\Sigma_{{X}}}
\bpar{1+ \max_{1\le i \le p}\vertii{\bstar_{:i}}_2^2 }
+ \vertiii{\Sigma_{{Y}} } }
\end{align*}

Finally, we state the final result.
For a free parameter \black{$b>0$}, choose \black{ $t=\sqrt{\frac{(b+1)\log(pq)}{\tilde{c}T}}$}, for $\black{T\ge \frac{(b+1)\log({pq})}{\tilde{c}}}$ we have with probability at least
\[
1-8 
\exp[-b\log(pq)]
\]
that
$$
\vertiii{\frac{\mt{X}'\mt{W}}{T} }_{\infty} 
\le 
 \sqrt{\frac{(b+1)\log(pq)}{\tilde{c}T}}
4\sum_{l=0}^{T}\rho(l) \bpar{
\vertiii{\Sigma_{{X}}}
\bpar{1+ \max_{1\le i \le p}\vertii{\bstar_{:i}}_2^2 }
+ \vertiii{\Sigma_{{Y}} } }
$$
Also, because of Fact \ref{fact:alpharhoEquiv}, we have
$$
\vertiii{\frac{\mt{X}'\mt{W}}{T} }_{\infty} 
\le 
 \sqrt{\frac{(b+1)\log(pq)}{\tilde{c}T}}
8\pi S_{\alpha}(T)
\bpar{
\vertiii{\Sigma_{{X}}}
\bpar{1+ \max_{1\le i \le p}\vertii{\bstar_{:i}}_2^2 }
+ \vertiii{\Sigma_{{Y}} } }
$$
\end{proof}

\begin{proof}[Proof of Proposition \ref{result:rhoRE}]

Note that, by Fact~\ref{fact:alpharhoEquiv}, $ \alpha$ and $ \rho$-mixing are equivalent for stationary Gaussian processes. The proof will operate via arguments involving $ \rho$-mixing coefficients. 

For a fixed unit test vector $\vc{v}\in \R^p $, $\vertii{\vc{v}}_2=1$, consider the Gaussian vector $\mt{X}\vc{v} \in \mathbb{R}^T.$ To apply the Hanson-Wright inequality (Lemma \eqref{thm:hanson}), we have to upper bound the operator norm of the covariance matrix $\mt{Q}$ of $\mt{X}\vc{v}$.

 $\mt{Q}$ takes the form
$$
\mt{Q} = \begin{bmatrix}
\vc{v}'\mathbb{E}X_1X_1'\vc{v}	 & \cdots 	&  	\vc{v}'\mathbb{E}X_1X_j'\vc{v}	& \cdots 	& \vc{v}'\mathbb{E}X_1X_T'\vc{v} \\ 
\vdots							 & 	\ddots		 &  	&  			& \vdots \\ 
 \vc{v}'\mathbb{E}X_tX_1'\vc{v}									&  & \vc{v}'\mathbb{E}X_tX_t' \vc{v} & 		 & \vc{v}'\mathbb{E}X_tX_T' \vc{v} \\ 
\vdots & 				 & & 		 \ddots						 &  	\vdots\\ 
\vc{v}'\mathbb{E}X_T X_1'\vc{v} &\cdots  & \vc{v}'\mathbb{E}X_T X_1'\vc{v} &  \cdots& \vc{v}'\mathbb{E}X_T X_T'\vc{v}\\
\end{bmatrix} 
$$

We can thus use Fact~\ref{fact:schurTest} and Lemma~\ref{result:lemma} to upper bound $ \vertiii{\mt{Q}}  $ by
 $$\sum_{t=0}^T\rho(l) \vertiii{\Sigma_X(0) } . $$
Now, we can apply Lemma~\ref{thm:hanson} on any fixed unit test vector $ \vc{v} \in \mathbb{R}^{p},\, \vertii{\vc{v}}_2=1$. \\
Recall $\hat{\Gamma} := \frac{\mt{X}'\mt{X}}{T} \in \R^{p\times p}$. Using Lemma~\ref{thm:hanson},  we have, $\forall \eta>0$
\begin{align*}
\mathbb{P} \lbrack |\vc{v}'(\hat{\Gamma} - \Sigma_{X}(0))\vc{v} >\eta \vertiii{\mt{Q}} |  \rbrack &\le 2\exp\{-cT \min(\eta,\eta^2) \} \Rightarrow \\
\mathbb{P} \lbrack \vc{v}'(\hat{\Gamma} - \Sigma_{X}(0))\vc{v} >\eta \sum_{t=0}^T\rho(l) \vertiii{\Sigma_{X}(0)}    \rbrack &\le 2\exp\{-cT \min(\eta,\eta^2) \} .
\end{align*}

Using Lemma F.2 in \cite{basu2015regularized}, for any integer $k>0$, we extend it to all vectors in $\mathbb{J}(2k):=\{ \vc{v}\in \mathbb{R}^{p}: \Vert \vc{v} \Vert \le 1,\Vert \vc{v}\Vert_0 \le 2k  \}$:
\begin{align*}
\mathbb{P} &\left[ \sup_{\vc{v}\in \mathbb{J}(2k)}|\vc{v}'(\hat{\Gamma} - \Sigma_{X}(0))\vc{v}| > \eta \sum_{t=0}^T\rho(l)
\vertiii{\Sigma_{X}(0)  }  \right]\\
 &\le 2\exp\{-cT \min\{\eta,\eta^2\} 
  + 2k\min\{\log(p), \log(\frac{21ep}{2k}))\} .
\end{align*}

\edit{
By Lemma 12 in \cite{loh2012high}, we further extend the bound to all $\forall \vc{v} \in \mathbb{R}^{p}$,
\begin{align*}
\prob &\bcur{|\vc{v}'(\hat{\Gamma} - \Sigma_{X}(0))\vc{v} |
>27\eta \sum_{t=0}^T\rho(l)\vertiii{ \Sigma_{X}(0) } \left(\Vert \vc{v}\Vert_2^2 +\frac{1}{k}\Vert \vc{v} \Vert_1^2 \right) 
}
\\
&\le 
 2 \exp\{-cT \min(\eta,\eta^2)  + 2k\min(\log(p), \log(\frac{21ep}{2k}))\} \\
&\Updownarrow\\
\prob &\bcur{
|\vc{v}'(\hat{\Gamma} - \Sigma_{X}(0))\vc{v} |
\le 
27\eta \sum_{t=0}^T\rho(l)\vertiii{ \Sigma_{X}(0) } \left(\Vert \vc{v}\Vert_2^2 +\frac{1}{k}\Vert \vc{v} \Vert_1^2 \right)
}
\\
&> 
1 -   2 \exp\{-cT \min(\eta,\eta^2)  + 2k\min(\log(p), \log(\frac{21ep}{2k}))\} \\
& \Downarrow\\
\prob &\bcur{|\vc{v}'(\hat{\Gamma} )\vc{v} |
> -27\eta \sum_{t=0}^T\rho(l)\vertiii{ \Sigma_{X}(0) } \left(\Vert \vc{v}\Vert_2^2 +\frac{1}{k}\Vert \vc{v} \Vert_1^2 \right) + \lmin{ \Sigma_{X}(0) }\Vert \vc{v}\Vert_2^2 } \\
&>  
1 -   2 \exp\{-cT \min(\eta,\eta^2)  + 2k\min(\log(p), \log(\frac{21ep}{2k}))\}
\end{align*}
}

Intuitively, we know the quadratic form of a Hermitian matrix should have its magnitude bounded from below by its minimum eigenvalue. To achieve that, pick \black{
$\eta =
\frac{\lmin{\Sigma_X(0)}}{54\sum_{t=0}^T\rho(l)\lmax{\Sigma_X(0)}}
$ }. So, we have
$$
|\vc{v}'\hat{\Gamma} \vc{v} |
> \frac{1}{2} \lmin{\Sigma_{X}(0)}  \Vert \vc{v}\Vert_2^2 - 
\frac{\lmin{\Sigma_{X}(0)} }{2k}
\Vert \vc{v} \Vert_1^2 
$$

w.p. 
$$\ge 1 -   2 \exp\{-cT \min(1,\eta^2)  + 2k\min(\log(p), \log(\frac{21ep}{2k}))\}
$$ 
because $\min(1,\eta^2)\le \min(\eta,\eta^2)$.

Now, we choose $k$ to make sure the first component in the exponential dominates. For now, assume $p  \ge \frac{21ep}{2k}$. Let \black{ $k = \ceil{c\frac{T}{4 \log(p)} \min\{1,\eta^2\}}$}. Now, choose $T$ such that $k \ge \frac{21e}{2}$  . Let \black{$T\ge \frac{42 e \log(p)}{c \min\{1, \eta^2\}} $}, where $s$ is the sparsity. 

Finally, we have, for $T\ge s\frac{42 e \log(p)}{c \min\{1, \eta^2\}}$, with probability at least 
$$
1-2\exp\{ -T\frac{c}{2} \min\{1,\eta^2 \} \}{}
$$
the following holds 
$$
|\vc{v}'\hat{\Gamma} \vc{v} |
> \frac{1}{2} \lmin{\Sigma_{X}(0)}  \Vert \vc{v}\Vert_2^2 - 
\frac{\lmin{\Sigma_{X}(0)} }{2k}
\Vert \vc{v} \Vert_1^2  \\
$$

Also, let 
$\tilde{\eta} :=
\frac{\lmin{\Sigma_X(0)}}{108\pi  S_{\alpha}(T)\lmax{\Sigma_X(0)}}
$
we can bound $ \eta $ with $ \tilde{\eta} $ by Fact~\ref{fact:alpharhoEquiv}.
\end{proof}

\section{Hanson-Wright Inequality}
%
The general statement of the Hanson-Wright inequality can be found in the paper by \cite{rudelson2013hanson} (see their Theorem 1.1).
We use a form of the inequality which is derived in the proof of Proposition 2.4 of \cite{basu2015regularized} as an easy consequence of the general result. We state the modified form of the inequality and the proof below for completeness. 

\begin{lm}[Variant of Hanson-Wright Inequality]\label{thm:hanson}
If $ {Y} \sim N(\vc{0}_{n\times 1},\mt{Q}_{n\times n}) $, then there exists universal constant $ c>0 $ such that for any $ \eta>0 $,
\begin{equation} \label{eq:hanson}
\prob \bbra{
\frac{1}{n}\verti{
\vertii{Y}_2^2
- 
\E \vertii{Y}^2_2
}
>
\eta\vertiii{\mt{Q}}
}
\le
2\exp
\bbra{
-cn \min \bcur{
\eta,\eta^2
}
} .
\end{equation}

\end{lm}
%
\begin{proof}
The lemma easily follows from Theorem 1.1 in \cite{rudelson2013hanson}. Write $ Y = \mt{Q}^{1/2}X $, where $ X \sim \mathcal{N}(0,\mt{I}) $ and $ (\mt{Q}^{1/2})'(\mt{Q}^{1/2}) = \mt{Q} $. Note that each component $ X_i $ of $ X $ is independent $ \mathcal{N}(0, 1) $, so that $ \snorm{X_i} \le 1 $.
Then, by the above theorem,
\begin{align*}
\prob \bbra{
\frac{1}{n}\verti{
\vertii{Y}_2^2
- \Tr(\mt{Q})
}
>
\eta\vertiii{\mt{Q}}
}
&= 
\prob \bbra{
\frac{1}{n}\verti{
X'\mt{Q}X
- 
\E X'\mt{Q}X
}
>
\eta\vertiii{\mt{Q}}
} \\
&\le 
2\exp
\bbra{
-c \min \bcur{
\frac{n^2 \eta^2\vertiii{\mt{Q}}}{\vertiii{\mt{Q}}_F^2},
\frac{n \eta\vertiii{\mt{Q}}}{\vertiii{\mt{Q}}}
} 
} \\
&\le
2\exp
\bbra{
-c \min \bcur{
\eta,\eta^2
}
} \quad \quad \text{since $ \vertiii{\mt{Q}}_F^2 \le n\vertiii{\mt{Q}}^2 $}
\end{align*}
Lastly, note that $ \Tr(\mt{Q}) = \Tr(\E YY') =\E \Tr( YY') =\E \Tr( Y'Y) = \E \Tr\vertii{Y}^2= \E \vertii{Y}^2 $.
\end{proof}

\section{Proofs for Subweibull Random Vectors under $\beta$-Mixing}\label{proof:subweill}

\subsection{Proof Related to Subweibull Properties}

\begin{proof}(of Lemma \ref{lem: subWei})
\emph{Property 1 $\Rightarrow$ Property 2:}
Since we can scale $X$ by $K_1$, without loss of generality, we can assume $ K_1 =1  $. Then we have, for $p \ge \gamma$,
\begin{align*}
\E \verti{X}^p 	&= \int_{0}^{\infty}\mathbb{P} \bpar{ \verti{X}^p \ge u } du \\
				&= \int_{0}^{\infty}\mathbb{P} \bpar{ \verti{X} \ge t } pt^{p-1 }dt  && \text{using change of variable } u=t^p\\
				&\le \int_{0}^{\infty} 2 e^{-t^{\gamma}} pt^{p-1} dt   && \text{by Property 1} \\
				&= \frac{2p}{\gamma} \int_{0}^{\infty} e^{-v} \cdot v^{\frac{p-1}{\gamma}} v^{\frac{1-\gamma}{\gamma}} dv &&\text{using change of variable } v=t^{\gamma}\\
				&= \frac{2p}{\gamma} \int_{0}^{\infty} e^{-v} \cdot v^{p/ \gamma - 1} dv\\		
				&= \frac{2p}{\gamma} \cdot\Gamma\left(  \frac{p}{\gamma}\right)\\				
			&\le \frac{2p}{\gamma} \left(  \frac{p}{\gamma}\right)^{p/\gamma} &&\text{since } \Gamma(x) \le x^x,\, \forall x\ge 1 .
\end{align*} 
Therefore, for $p \ge \gamma$,
\begin{align*}
\bpar{\E \verti{X}^p}^{1/p} \le 2^{1/p} {(1/\gamma)}^{1/p} p^{1/p} \bpar{p/\gamma}^{1/\gamma}\le C_\gamma \cdot p^{1/\gamma}
\end{align*}
where $C_\gamma = 4(1/\gamma \vee 1) (1/\gamma)^{1/\gamma}$. If $\gamma \le 1$, this covers all $p \ge 1$. If $\gamma > 1$,
we have, for $p = 1,\ldots,\lceil \gamma \rceil - 1$,
\begin{align*}
\bpar{\E \verti{X}^p}^{1/p} \le 2^{1/p} {(1/\gamma)}^{1/p} p^{1/p} \max_{i=1,\ldots,\lceil \gamma \rceil - 1}\ \Gamma(i/\gamma)^{1/i} \le C'_\gamma ,
\end{align*}
where $C'_\gamma = 4(1/\gamma \vee 1) \max_{i=1,\ldots,\lceil \gamma \rceil - 1}\ \Gamma(i/\gamma)^{1/i}$. Therefore, for all $p$,
\begin{align*}
\bpar{\E \verti{X}^p}^{1/p} \le (C_\gamma \vee C'_\gamma) \cdot p^{1/\gamma} .
\end{align*}

\emph{Property 2 $\Rightarrow$ Property 3:}
Without loss of generality, we can assume $ K_2 =1  $. Using Taylor series expansion of $ \exp(\cdot) $, for some positive $ \lambda $,
\begin{align*}
\E \exp\bbra{\bpar{\lambda\verti{X}}^{\gamma_2} }
	&= \E \bbra{ 1 + \sum_{p=1}^{\infty}\frac{\E \bbra{\bpar{\left( \lambda  \verti{X} \right)^{\gamma}}^p}}{p!}} \\
	&\le 1 + \sum_{p=1}^{\infty} \frac{\bpar{\lambda^{\gamma }\gamma p}^p}{\bpar{p/e}^p} && \text{by Property 2 and Stirling's approx.  } \\
	&=\sum_{p=0}^{\infty}\bpar{e\gamma \lambda^{\gamma}}^p = \frac{1}{1- e \gamma \lambda^{\gamma}} \le 2 ,
\end{align*}
where the last inequality holds for any $ \lambda  $ satisfying $ e \gamma \lambda^{\gamma} \le 1/2$, i.e., ${\lambda} \le \bpar{2e\gamma}^{-1/{\gamma}} $. Therefore Property 2 holds with $ K_3 = \bpar{2e\gamma}^{1/{\gamma}} $.

\emph{Property 3 $\Rightarrow$ Property 1:}
Without loss of generality, we can assume $ K_3 =1  $. For all $ t>0 $,
\begin{align*}
\prob\left(  \verti{X}>t \right)&=\prob \left(  \exp\left(  \verti{X}^{\gamma_2}\right) \ge \exp\left(   t^{\gamma_2}\right) \right) \\
&\le  \exp \left(  -\left(   t^{\gamma_2}\right)\right)\E \exp\left(  \verti{X}^{\gamma_2}\right) &&\text{by Markov's inequality}  \\
&\le 2\exp \left(  -\left(   t^{\gamma_2}\right) \right)&&\text{by Property 3.} \qedhere
\end{align*}
\end{proof}

\begin{proof}(of Lemma~\ref{lemma:gammaNormOfSquares})
By definition,
\begin{align*}
\vertii{X^2}_{\psi_\gamma} &= \sup_{p\ge 1} p^{-1/\gamma}\bpar{\E \verti{X^2}^p}^{1/p} \\
&= \sup_{p\ge 1} \bpar{ p^{-1/(2\gamma)}\bpar{\E \verti{X}^{2p}}^{1/2p}}^2 
\end{align*}
Now we make a change of variables $ \tilde{p} := 2p $. Then, we have,
\begin{align*}
\vertii{X^2}_{\psi_\gamma}  &= 2^{1/\gamma} \sup_{\tilde{p}\ge 2} \bpar{ \tilde{p}^{-1/(2\gamma)}\bpar{\E \verti{X}^{\tilde{p}}}^{1/\tilde{p}}}^2 \\
&\le  2^{1/\gamma} \sup_{\tilde{p}\ge 1} \bpar{ \tilde{p}^{-1/(2\gamma)}\bpar{\E \verti{X}^{\tilde{p}}}^{1/\tilde{p}}}^2 \\
&=  2^{1/\gamma}\bpar{  \sup_{\tilde{p}\ge 1} \tilde{p}^{-1/(2\gamma)}\bpar{\E \verti{X}^{\tilde{p}}}^{1/\tilde{p}}}^2 \\
&= 2^{1/\gamma} \vertii{X}_{\psi_{2\gamma}}^2 . \qedhere
\end{align*}
\end{proof}

\subsection{Subweibull Norm Under Linear Transformations}

We will need the following result about changes to the subweibull norm under linear transformations.

\begin{lm}
\label{result:subweibulllinear}
Let $X$ be a random vector and $\mt{A}$ be a fixed matrix. We have,
\[
\norm[\mt{A} X]{\psi_\gamma} \le \vertiii{\mt{A}} \cdot \norm[X]{\psi_\gamma}
\]
\end{lm}
\begin{proof}
We have,
\begin{align*}
\vertii{\mt{A} X}_{\psi_\gamma} &= 
\sup_{\vertii{v}_2\le 1}\vertii{v'\mt{A} X}_{\psi_\gamma} = \sup_{\vertii{v}_2\le 1}\vertii{(\mt{A}' v)' X}_{\psi_\gamma}\\
&\le \sup_{\vertii{u}_2\le \vertiii{\mt{A}}}\vertii{u' X}_{\psi_\gamma}\\
&= \vertiii{\mt{A}} \sup_{\vertii{u}_2\le 1}\vertii{u' X}_{\psi_\gamma} =  \vertiii{\mt{A}}\vertii{X}_{\psi_\gamma} . \qedhere
\end{align*}
\end{proof}

\subsection{Concentration Inequality for Sums of $\beta$-Mixing Subweibull Random Variables}
\label{sec:betamixingsubweibull}

We will state and prove a modified form of Theorem 1 of~\cite{merlevede2011bernstein}. This concentration result will be used to prove the high probability guarantees on the deviation bound (Proposition~\ref{Lemma:SubweibullDevBound}) and lower restricted eigenvalue
 (Proposition~\ref{Lemma:SubweibullRE}) conditions.

\begin{lm}\label{lemma:convenient form}
Let $ (X_j)_{j=1}^T $ be a strictly stationary sequence of zero mean random variables that are subweibull($\gamma_2$) with subweibull constant $K$. Denote their sum by $S_T$. Suppose their $\beta$-mixing coefficients
satisfy $\beta(n) \le 2 \exp(-c n^{\gamma_1})$.
Let $ \gamma$ be a parameter given by
\begin{align*}
\frac{1}{\gamma} = \frac{1}{\gamma_1} + \frac{1}{\gamma_2} .
\end{align*}
Further assume $ \gamma <1 $. Then for $ T>4 $, and any $ t> 1/T $ ,
\begin{align*}
\mathbb{P} \bcur{ \verti{ \frac{S_T}{T} } >t} 
&\le
T \exp \bcur{   -\frac{(tT)^{\gamma}}{K^\gamma C_1}} + \exp\bcur{ -\frac{t^2T}{K^2 C_2}} \numberthis \label{eq:convenientConcentration}
\end{align*}
where the constants $C_1, C_2$ depend only on $\gamma_1, \gamma_2$ and $c$.
\end{lm}
\begin{proof}
Note that, in this proof, constants $C, C_1, C_2, \ldots$ can depend on $c, \gamma_1$ and $\gamma_2$ and $C_1, C_2$ in the proof are not the same as the eventual constants $C_1, C_2$ that appear in the lemma statement.

Further, we will assume that $K=1$. The general form then follows by scaling the random variables by $1/K$ and applying the lemma with $t$ replaced by $t/K$.
The proof consists of two parts. First, we will state a concentration inequality of~\cite{merlevede2011bernstein} and bound a certain parameter $ V $ appearing in their inequality using the $\beta $-mixing assumption. Second, we will simplify the
expression that we get directly from their concentration inequality to get a more convenient form.

\paragraph{\textbf{Step 1: Controlling the $ V $ parameter using $ \beta $-mixing coefficients}}
First, recall that Theorem 1 of~\cite{merlevede2011bernstein}, under the condition of our lemma, gives
\begin{align*}
\mathbb{P} \bcur{ \verti{ S_T} >u} 
&\le
T \exp \bcur{   -\frac{u^{\gamma}}{C_1}} + 
\exp\bcur{ -\frac{u^2}{C_2(1+TV)}} \\
&\quad+ \exp \bcur{ -\frac{u^2}{C_3T} \exp \bcur{\frac{1}{C_4}\bpar{\frac{u^{(1-\gamma)}}{\log (u)}}^{\gamma}}} \numberthis \label{eq:MerlevedeConcentration} 
\end{align*}

First of all, we need to control the quantity $ V $ that appears in the denominator of the second term of \eqref{eq:MerlevedeConcentration}.{ $ V $ is a worst case measure of the partial sum of the  auto-covariances on the clipped dependent sequence $ (X_t)_{t=1}^T $. It is increasing in time horizon $ T $ and related to dimension $ p $ and sparsity $ s $ and hence not an absolute constant. To the best of our knowledge, $ V $ is not controllable under the weaker $ \alpha$-mixing condition.}  As \cite{merlevede2011bernstein} mention in their Section 2.1.1, using results of \cite{viennet1997inequalities}, we have, for any $ \beta $-mixing strictly stationary sequence $ (Y_t) $ with geometrically decaying $ \beta $-mixing coefficients; i.e.,
\begin{align*}
\beta(k) \le 2\exp\bcur{-c k^{\gamma_1}} \;\text{for any positive }k
\end{align*}
the associated quantity $V$ can be upper bounded as
\begin{align*}
V\le \E X_{1}^2  + 4\sum_{k\ge 0} \E (B_k X_{1}^2)
\end{align*}
for some sequence $ (B_k) $ with values in $ [0,1] $ satisfying $ \E{(B_k)} \le \beta(k) $. In our case, $ (X_t) $ is stationary and we know that its finite moments exist because of Assumption~\ref{assump:subW}. Then,
\begin{align*}
V&\le \E X_{1}^2  + 4\sum_{k\ge 0} \E (B_k X_{1}^2) \\
&\le 
\E X_{1}^2  + 4\sum_{k\ge 0} \sqrt{\E (B_k^2) \E( X_{1}^4) } &&\text{Cauchy-Schwarz ineqeuality} \\
&=
\E X_{1}^2  + 4\sqrt{\E( X_{1}^4)}\sum_{k\ge 0} \sqrt{\E (B_k^2) } &&\text{all finite moments of } X_{1} \text{ exist  }\\
&\le
\E X_{1}^2  + 4\sqrt{\E( X_{1}^4)}\sum_{k\ge 0} \sqrt{\E (B_k) }\\
&\le C ,
\end{align*}
where the second to last inequality follows because $ B_k \in [0,1] \Rightarrow B_k^2 \le B_k $. The last inequality comes from the fact that $ \sqrt{\E{(B_k)}} \le \sqrt{\beta(k) } \le \sqrt{2}\exp\bcur{-\half c k^{\gamma_1}} \Rightarrow (\sqrt{\E{(B_k)}})$ summable. Moreover since $X_1$ is subweibull($\gamma_2$) with constant $1$,
both $\E X_{1}^2$ and $\E( X_{1}^4)$ are bounded with constants depending only on $\gamma_2$. Note that $ C $ depends on $ c, \gamma_1 $ and $\gamma_2$.

\paragraph{\textbf{Step 2: Deriving a Convenient form}}
Eventually we will apply the concentration inequality above with $ u=tT $, and we will choose $ t $ such that $ u=tT  > 1$.
Under the condition that $u > 1$, we will now show that the term appearing in the exponent
in the third term in~\eqref{eq:MerlevedeConcentration},
\begin{align}\label{eq:thrdterm}
\frac{(u)^{(1-\gamma)}}{(\log (u))} ,
\end{align}
is larger than a $\gamma$-dependent constant. Along with the fact that $V$ is a constant in the second term, the second and third terms in~\eqref{eq:MerlevedeConcentration} can then be combined into one. 

Let $ u >1$. Note that the expression~\eqref{eq:thrdterm} remains positive and blows up to infinity as $ u  $ approaches $ 1 $ from above. Taking derivative with respect to $ u $, we obtain
\begin{align*}
\frac{d}{du}\frac{u^{(1-\gamma)}}{(\log (u))} & = \frac{u^{-\gamma}}{\log(u)} \bbra{ (1-\gamma) -\frac{1}{\log(u)}  }
\end{align*}

Observe that the derivative is negative when $ u < u^\ast= e^{\frac{1}{1-\gamma}} $; for $ u >u^\ast $, it becomes positive again. Hence, the expression~\eqref{eq:thrdterm} reaches its minimum at $ u^\ast $, where its value is,
\begin{align*}
\frac{\bpar{e^{\frac{1}{1-\gamma}}}^{1-\gamma}}{\frac{1}{1-\gamma}} = e(1-\gamma) ,
\end{align*}
which is positive since $\gamma < 1$.
\end{proof}

\subsection{Proofs of Deviation and RE Bounds}

\begin{proof}(of Proposition~\ref{Lemma:SubweibullDevBound})
Note that constants $C_1, C_2, \ldots$ can change from line to line and depend only on $\gamma_1, \gamma_2, c$ appearing in Assumption~\ref{assump:betamixing} and
Assumption~\ref{assump:subW}, and on the constant $c'$ appearing in the high probability guarantee.

Recall that $ \mt{W }:=  \mt{Y} -\mt{X}\bstar $, and  $\vertiii{\mt{X}'\mt{W} }_{\infty}= \max_{1\le i \le p,1\le j \le q } | [\mt{X}'\mt{W}]_{i,j} |= 
\max_{1\le i \le p,1\le j \le q }\verti{ (\mt{X}_{:i})'\mt{W}_{:j}}$. \\

By Assumption~\ref{as:0mean}, we have
\begin{align*}
&\mathbb{E} {\mt{X}_{:i}}=\vc{0},\forall i =1,\cdots, p\;\;\;\text{and} \\
& 
 \mathbb{E} {\mt{Y}_{:j}}=\vc{0},\forall j  =1,\cdots, q
\end{align*}

By first order optimality of the optimization problem in \eqref{eqn:bstar}, we have
$$
\mathbb{E} (\mt{X}_{:i})'(\mt{Y}-\mt{X}\bstar)=\vc{0},\forall i \;\\
\Rightarrow 
\mathbb{E} ({\mt{X}_{:i}})'\mt{W}_{:j}=0,\forall i,j 
$$
We know $\forall i,j $
\begin{align*}
&\quad \verti{(\mt{X}_{:i})'\mt{W}_{:j}} \\
&= \verti{(\mt{X}_{:i})'\mt{W}_{:j} - \E[ (\mt{X}_{:i})'\mt{W}_{:j} ]}  \\
&=\frac{1}{2} \left| \left(\Vert \mt{X}_{:i}+ \mt{W}_{:j} \Vert^2 - \E[\Vert \mt{X}_{:i}+ \mt{W}_{:j} \Vert^2 ] \right) \right. \\
&\quad \left. - \left( \Vert \mt{X}_{:i} \Vert^2 - \E[\Vert \mt{X}_{:i} \Vert^2 ] \right) 
- \left( \Vert \mt{W}_{:j} \Vert^2 -\E[ \Vert \mt{W}_{:j} \Vert^2] \right) \right| \\
&\le \half \verti{  \Vert \mt{X}_{:i}+ \mt{W}_{:j} \Vert^2 - \E[\Vert \mt{X}_{:i}+ \mt{W}_{:j} \Vert^2 ]  } \\
&\quad + \half \verti{  \Vert \mt{X}_{:i} \Vert^2 - \E[\Vert \mt{X}_{:i} \Vert^2 ] } + \half \verti{  \Vert \mt{W}_{:j} \Vert^2 -\E[ \Vert \mt{W}_{:j} \Vert^2]  } 
\end{align*}
Therefore,
\begin{align*}
&\quad \prob \bpar{ \frac{1}{T} \verti{(\mt{X}_{:i})'\mt{W}_{:j}} > 3t } \\
&\le \prob \bpar{ \frac{1}{2T} \verti{  \Vert \mt{X}_{:i}+ \mt{W}_{:j} \Vert^2 - \E[\Vert \mt{X}_{:i}+ \mt{W}_{:j} \Vert^2 ]  } > t }
+ \prob \bpar{ \frac{1}{2T} \verti{  \Vert \mt{X}_{:i} \Vert^2 - \E[\Vert \mt{X}_{:i} \Vert^2 ] } > t } \\
&\quad + \prob \bpar{ \frac{1}{2T} \verti{  \Vert \mt{W}_{:j} \Vert^2 -\E[ \Vert \mt{W}_{:j} \Vert^2] } > t } 
\end{align*}

We will now control each of the three the tail probabilities above. Before we apply Lemma~\ref{lemma:convenient form}, we have to figure out their subweibull norms.
We will first calculate the subweibull($\gamma_2$) norm of $\mt{X}_{ti}, \mt{W}_{tj}$ and $\mt{X}_{ti} + \mt{W}_{tj}$. This will immediate yield control of the subweibull($\gamma_2/2$) norms of their
squares via Lemma~\ref{lemma:gammaNormOfSquares}.

Recall that
\begin{align*}
{\mt{W}_{t:}} &=\mt{Y}_{t:} -(\mt{X}\bstar)_{t:} \\
				&=\mt{Y}_{t:} -{\mt{X}_{t:}}\bstar 
\end{align*}
Therefore, we have,
\begin{align*}
\gnorm[\mt{W}_{tj} ]
&\le \knorm{\mt{W}_{t:}}  & &\text{by Definition }\ref{def:vector} \\
&= \knorm{\mt{Y}_{t:} -{\mt{X}_{t:}}\bstar} \\
&\le \knorm{\mt{Y}_{t:}} +\knorm{{\mt{X}_{t:}}\bstar} & &\knorm{\cdot} \text{ is a norm} \\
&\le \knorm{\mt{Y}_{t:}} +\knorm{{\mt{X}_{t:}}}\vertiii{\bstar} && \text{by Lemma }\ref{result:subweibulllinear} \\
&\le K_Y +\vertiii{\bstar}  K_X & &\text{by Assumption }\ref{assump:subW} .
\end{align*}
We also have,
\begin{align*}
\knorm{\mt{X}_{ti} + \mt{W}_{tj}} &\le \knorm{\mt{X}_{ti}} + \knorm{\mt{W}_{tj}} &&\knorm{\cdot} \text{ is a norm} \\
						&\le {K_{Y}} + {K_{X}}\bpar{1+\vertiii{\bstar} } .
\end{align*}
Using Lemma~\ref{lemma:gammaNormOfSquares}, we know that the subweibull($\gamma_2/2$) constants of
the squares of $\mt{X}_{ti}, \mt{W}_{tj}$ and $\mt{X}_{ti} + \mt{W}_{tj}$ are all bounded by
\begin{align*}
 K = 2^{2/\gamma_2} \bpar{{K_{Y}} + {K_{X}}\bpar{1+\vertiii{\bstar} }  }^2 .
\end{align*} 
We now apply Lemma~\ref{lemma:convenient form} three times with $\gamma_2$ replaced by $\gamma_2/2$, to get, for any $t > 1/2T$,
\begin{align*}
\prob \bpar{ \frac{1}{T} \verti{(\mt{X}_{:i})'\mt{W}_{:j}} > 3t } 
&\le 3T \exp \bcur{   -\frac{(2tT)^{\gamma}}{K^\gamma C_1}} + 3\exp\bcur{ -\frac{4t^2T}{K^2 C_2}},
\end{align*}
where $\gamma = (1/\gamma_1 + 2/\gamma_2)^{-1}$ is less than $1$ by Assumption~\ref{assump:gammaSmall}.

Now, taking a union bound over the $pq$ possible values of $i, j$, gives us
\begin{align*}
\prob \bpar{ \frac{1}{T} \vertiii{\mt{X}'\mt{W} }_{\infty} > 3t } 
&\le 3T pq \exp \bcur{   -\frac{(2tT)^{\gamma}}{K^\gamma C_1}} + 3pq\exp\bcur{ -\frac{4t^2T}{K^2 C_2}} .
\end{align*}
If we set,
\[
t = K \max\left\{ C_2 \sqrt{\frac{\log(3pq)}{T}} , \frac{C_1}{T} \left( \log(3 T p q) \right)^{1/\gamma} \right\}
\]
then the probability of the large deviation event above is at most $$2 \exp(- c' \log (3 p q)) .$$ Note that the constant $c'$ can be made arbitrarily large
but affects the constants $C_1, C_2$ above. 

In the expression for $t$ above, we want to ensure that two conditions are met. First, the $1/\sqrt{T}$ term should dominate. 
That is, we want,
\[
\sqrt{ \frac{\log(3pq)}{T}} \ge \frac{C_1}{T} \left( \log(3 T p q) \right)^{1/\gamma} ,
\]
which, in turn, is implied by
\[
\sqrt{ \frac{\log(3pq)}{T}} \ge \frac{C_2}{T} \left( \log(3 T ) \right)^{1/\gamma}
\quad
\text{ and }
\quad
\sqrt{ \frac{\log(3pq)}{T}} \ge \frac{C_2}{T} \left( \log(pq) \right)^{1/\gamma}
\]
Both of these are met if $T \ge C_3 (\log(pq))^{\tfrac{2}{\gamma}-1}$.

Finally, the condition $t > 1/2T$ should be met.That is,
\[
C_2 \sqrt{\frac{\log(3pq)}{T}} > \frac{1}{2T}
\]
which happens as soon as $T \ge C_2^2/4$.
\end{proof}

\begin{proof}(of Proposition~\ref{Lemma:SubweibullRE})
Recall that  $ X_1, \cdots, X_t  \in \R^p$ are subweibull random variables forming a $ \beta $-mixing and stationary sequence.

\paragraph{\textbf{Step I: Concentration for a fixed vector}}

Now, fix a unit vector $ v\in \R^p,\; \vertii{v}_2 =1 $.
Define real valued random variables $ Z_t = v' X_t,\, t=1, \cdots, T $. Note that the $ \beta $-mixing rate of
$ (Z_t) $ is bounded by the same of $ (X_t) $ by Fact~\ref{fact:mixingEquiv}. From Lemma~\ref{lemma:gammaNormOfSquares}, we know that $\vertii{Z_t^2}_{\psi_{\gamma_2/2}}  \le 2^{2/\gamma_2} \vertii{Z_t}^2_{\psi_{\gamma_2}}$. Moreover, $\vertii{Z_t}_{\psi_{\gamma_2}} \le \vertii{X_t}_{\psi_{\gamma_2}}$. Therefore, we can invoke Lemma~\ref{lemma:convenient form} for the sum $ S_T(v) = \sum_{t=1}^{T} \bpar{ Z_t^2 -\E Z^2_t }$ with $\gamma_2 $ replaced by $\gamma_2/2$, $\gamma = \left( 1/\gamma_1 + 2/\gamma_2 \right)^{-1}$ and $K = 2^{2/\gamma_2} K_X^2$ to get the following bound, for $T > 4$ and $t > 1/T$,
\[
\mathbb{P} \bcur{ \verti{ \frac{S_T(v)}{T} } >t} 
\le
T \exp \bcur{   -\frac{(tT)^{\gamma}}{K^\gamma C_1}} + \exp\bcur{ -\frac{t^2T}{K^2 C_2}}
\]

\paragraph{\textbf{Step II: Uniform concentration over all vectors}}
Let $  \mathbb{J}(2k) $ denote the set of $2k$-sparse vector with Euclidean norm at most $1$. Then, using union bound arguments similar to those in Lemma F.2 of \cite{basu2015regularized}, we have
\begin{align*}
\mathbb{P} &\bcur{ \sup_{v \in \mathbb{J}(2k)}\verti{ \frac{S_T(v)}{T} } > 3t }\\
&\quad \le
\exp  \bcur{  \log(T) -\frac{(tT)^{\gamma}}{K^\gamma C_1} +k\log(p)} 
+ \exp\bcur{ -\frac{t^2 T}{K^2 C_2}+k\log(p)} .
\end{align*}
From the $ 2k $-sparse set, we will extend our bound to all $ v \in \R^p $. To do so, we will apply Lemma 12 in~\cite{loh2012high}. For $ k\ge1 $, with probability at least
\begin{align}\label{eq:failure}
 1- \exp  \bcur{  \log(T) -\frac{(tT)^{\gamma}}{K^\gamma C_1} +k\log(p)} 
- \exp\bcur{ -\frac{t^2 T}{K^2 C_2}+k\log(p)}
\end{align}
the following holds uniformly for all $ v \in \R^p $
\begin{align*}
\frac{1}{T} \verti{S_T(v)} \ge 27t \bpar{ \vertii{v}^2_2 + \frac{1}{k}\vertii{v}^2_1 } .
\end{align*}
Let $ \hat{\Sigma}_T(v) := \frac{1}{T}\vertii{\mt{X}v}^2_2$ and note that $ \E \hat{\Sigma}_T(v) = v'\Sigma_X(0)v  $. Therefore, $ \frac{1}{T} S_T = \hat{\Sigma}_T(v) -\E \hat{\Sigma}_T(v)  $. Using these notations, the above inequality implies that
\begin{align*}
\hat{\Sigma}_T(v) &\ge  v'\bpar{\Sigma_X(0)}v -27\cdot t \bpar{\vertii{v}^2_2 + \frac{1}{k}\vertii{v}^2_1 }\\
&\ge \lmin{\Sigma_X(0)} \vertii{v}^2_2 -27\cdot t \bpar{\vertii{v}^2_2 + \frac{1}{k}\vertii{v}^2_1 } \\
&= \vertii{v}^2_2 \bpar{\lmin{\Sigma_X(0)}  -27t } -\frac{27t}{k}\vertii{v}^2_1 \\
&= \vertii{v}^2_2  \half \lmin{\Sigma_X(0)} -\frac{\lmin{\Sigma_X(0)}}{2k}\vertii{v}^2_1 ,
\end{align*}
where the last line follows by picking $t = \frac{1}{54}\lmin{\Sigma_X(0)} $. 

\paragraph{\textbf{Step III: Selecting parameters}} The only thing left is to set the parameter $k$ appropriately. We want to set it so that
\begin{align*}
  2k \log p = \min\left\{ \frac{(tT)^{\gamma}}{K^\gamma C_1} , \frac{t^2 T}{K^2 C_2} \right\}
\end{align*}
so that the failure probability in~\eqref{eq:failure} is at most $1-2T\exp(-k \log p)$.
We want the minimum above to be attained at the first term which means we want
\[
T \ge \left(\frac{K}{t}\right)^{\frac{2-\gamma}{1-\gamma}} \left(\frac{C_2}{C_1}\right)^{\frac{1}{1-\gamma}}
\]
Under this condition, we have
\[
k = \frac{(tT)^{\gamma}}{2 K^\gamma C_1 \log p} .
\]
To ensure that $ k\ge 1 $, we need
\begin{align*}
T&\ge \frac{54 K \bpar{2  C_1\log(p)}^{1/\gamma}}{\lmin{\Sigma_X(0)}}
\end{align*}

To conclude, we have the following RE guarantee. For sample size
\begin{align*}
T \ge \max \bcur{
\frac{54 K \bpar{2  C_1\log(p)}^{1/\gamma}}{\lmin{\Sigma_X(0)}}
,\,
\left(\frac{54 K}{\lmin{\Sigma_X(0)}}\right)^{\frac{2-\gamma}{1-\gamma}} \left(\frac{C_2}{C_1}\right)^{\frac{1}{1-\gamma}}
}
\end{align*}
we have with probability at least
\begin{align*}
 1-  2T \exp  \bcur{   -c' T^\gamma}, \text{ where } c' = \frac{\bpar{\lmin{\Sigma_X(0)}}^\gamma}{(54 K)^\gamma 2C_1}
\end{align*}
we have, for all $v \in \R^p$,
\begin{align*}
\hat{\Sigma}_T(v) &\ge  \alpha \vertii{v}^2_2   -\tau\vertii{v}^2_1 
\end{align*}
where
\begin{align*}
\alpha& = \half \lmin{\Sigma_X(0)}, & \tau &= \frac{\alpha}{2c'}  \cdot\bpar{ \frac{ \log(p)}{T^\gamma }} . \qedhere
\end{align*}
\end{proof}

%

\section{Verification of Assumptions for the Examples}

\subsection{VAR}\label{veri:VAR}


Note that every VAR(d) process has an equivalent VAR(1) representation (see e.g. \cite[Ch 2.1]{lutkepohl2005new}) as 
 \begin{align}\label{eq:VAR(d)}
 \tilde{Z}_t = \tilde{\mt{A}}\tilde{Z}_{t-1} + \tilde{\mathcal{E}}_t
 \end{align}
  where 
 \begin{align}\label{eq:VAR1}
 \tilde{Z_t}:=
 \begin{bmatrix}
 Z_t \\ 
 Z_{t-1} \\
 \vdots \\
 Z_{t-d+1}
  \end{bmatrix}_{(pd \times 1)} 
  &
  \tilde{\mathcal{E}}_t:=
\begin{bmatrix}
\mathcal{E}_t \\ 
\vc{0} \\
 \vdots \\
\vc{0} 
\end{bmatrix}_{(pd \times 1)}  
\text{and }\;\;
\tilde{\mt{A}}:=
\begin{bmatrix}
\mt{A}_1 & \mt{A}_2 & \cdots & \mt{A}_{d-1} & \mt{A}_d \\ 
\mt{I}_p &  \vc{0}   &     \vc{0}    &      \vc{0}  & \vc{0}  \\ 
\vc{0}     & \mt{I}_p &        &   \vc{0}      & \vc{0}  \\ 
\vdots    &     & \ddots    &  	\vdots     & \vdots\\ 
\vc{0}     &  \vc{0}    &    \cdots    &     \mt{I}_p & \vc{0} 
\end{bmatrix}_{(dp \times dp)} 
 \end{align}
Because of this equivalence, justification of Assumptions \ref{as:alphaMix}(Gaussian case) and~\ref{assump:betamixing} (subweibull case) will operate through this corresponding augmented VAR$(1)  $ representation.

For both Gaussian and subweibull VARs, Assumption~\ref{as:0mean} is true since the sequences $ (Z_t) $ is centered. Second,  $ \bstar=(\mt{A}_1, \cdots, \mt{A}_d) $. So Assumption~\ref{as:spars}  follows from construction.

For the remaining assumptions, we will consider the Gaussian and subweibull cases separately.

\paragraph{Gaussian VAR}
$ (Z_t) $ satisfies Assumption~\ref{as:gauss} by model assumption.

To show that $({Z}_t)   $ is $ \alpha $-mixing with summable coefficients, we use the following facts together with the equivalence between $ (Z_t) $ and $ (\tilde{Z}_t) $ and Fact \ref{fact:mixingEquiv}.

Since $ (\tilde{Z}_t) $ is stable, the spectral radius of $ \tilde{\mt{A}} $, $r(\tilde{\mt{A}})<1  $, hence Assumption~\ref{as:stat} holds. Also the innovations $ \tilde{\mathcal{E}} $ has finite  first absolute moment and positive support everywhere. Then, according to   \citet[Theorem 4.4]{tjostheim1990non},  $ (\tilde{Z}_t) $ is \textit{geometrically ergodic}. Note here that Gaussianity is \emph{not} required here. Hence, it also applies to innovations from mixture of Gaussians.
 
Next, we present a standard result (see e.g. \cite[Proposition 2]{liebscher2005towards}). 
 
\begin{fact}\label{fact:ergodic}
A stationary Markov chain $\{\sv{Z}_t\}$ is geometrically ergodic implies 
 $\{\sv{Z}_t\}$ is \textit{absolutely regular} (or $ \beta $-mixing) with 
 $$
 \beta(n)=O(\gamma^n),\,\, \gamma \in(0,1)
 $$
\end{fact}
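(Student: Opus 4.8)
The plan is to use the classical representation of the $\beta$-mixing coefficients of a stationary Markov chain through its $n$-step transition kernel, and then feed in the geometric rate supplied by geometric ergodicity. Throughout, write $\pi$ for the stationary law of $\{Z_t\}$, $P^n(x,\cdot)$ for the $n$-step transition kernel, and $\|\mu-\nu\|_{\mathrm{TV}}:=\sup_A|\mu(A)-\nu(A)|$ for the total variation distance between probability measures (this normalization matches the definition of $\beta$ in Section~\ref{sec:mixingintro}).

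First I would establish the identity $\beta(n)=\int\|P^n(x,\cdot)-\pi\|_{\mathrm{TV}}\,\pi(dx)$. Since $\beta(n)$ is the total variation distance between the joint law of the past block $(\dots,Z_{-1},Z_0)$ and the future block $(Z_n,Z_{n+1},\dots)$ and the product of their marginals, the Markov property lets us replace conditioning on the whole past by conditioning on $Z_0$ alone, so $\beta(n)=\mathbb{E}_{Z_0\sim\pi}[\,\|\mathcal{L}(Z_{\ge n}\mid Z_0)-\mathcal{L}(Z_{\ge n})\|_{\mathrm{TV}}\,]$. A short coupling argument then collapses the future block to the single coordinate $Z_n$: projecting onto $Z_n$ cannot increase TV distance, while the law of $Z_{\ge n}$ is the image of its time-$n$ marginal under one fixed Markov kernel, which also cannot increase TV distance; hence $\|\mathcal{L}(Z_{\ge n}\mid Z_0=x)-\mathcal{L}(Z_{\ge n})\|_{\mathrm{TV}}=\|P^n(x,\cdot)-\pi\|_{\mathrm{TV}}$. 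This is Davydov's identity, and I would either cite it (e.g.\ \cite{bradley2005basic}) or spell out this two-line derivation.

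Next I would plug in geometric ergodicity. By definition there are $\rho\in(0,1)$ and a finite measurable function $M$ with $\|P^n(x,\cdot)-\pi\|_{\mathrm{TV}}\le M(x)\rho^n$ for $\pi$-a.e.\ $x$ and every $n$; the only extra fact required is that $M$ can be taken $\pi$-integrable. This is supplied by the standard equivalence between geometric ergodicity and a geometric drift (Foster--Lyapunov) condition: one obtains a drift function $V\ge 1$ with $\pi(V)<\infty$ and $V$-uniform geometric ergodicity $\|P^n(x,\cdot)-\pi\|_{\mathrm{TV}}\le\|P^n(x,\cdot)-\pi\|_V\le CV(x)\rho^n$. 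Substituting into the identity of the previous step gives $\beta(n)\le C\rho^n\,\pi(V)=O(\rho^n)$, which is the claim with $\gamma=\rho$.

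The one genuinely non-elementary point is the $\pi$-integrability of the multiplier $M$ (equivalently, the existence of a $\pi$-integrable drift function); the Markov collapse and the coupling of the first step are routine. I would discharge it via the Meyn--Tweedie circle of equivalences (geometric ergodicity $\Leftrightarrow$ geometric regeneration $\Leftrightarrow$ geometric drift), which directly yields a $\pi$-integrable $V$. Since the examples in this paper only need the \emph{existence} of some geometric $\beta$-mixing rate, a shorter route is simply to cite \cite{liebscher2005towards} (Proposition~2) or \cite{bradley2005basic} for the implication ``geometrically ergodic $\Rightarrow$ absolutely regular with $\beta(n)=O(\gamma^n)$'' and omit the drift bookkeeping.
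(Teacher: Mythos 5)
Your argument is correct, but it is worth noting that the paper does not actually prove Fact~\ref{fact:ergodic}: it states it as a standard result and points to Proposition~2 of \cite{liebscher2005towards}, which is exactly the ``shorter route'' you describe in your final paragraph. What you add beyond the paper is a genuine self-contained derivation, and it is the right one: the reduction of $\beta(n)$ to $\int \|P^n(x,\cdot)-\pi\|_{\mathrm{TV}}\,\pi(dx)$ via the Markov property plus the two contraction observations (projection onto the coordinate $Z_n$ and push-forward of $P^n(x,\cdot)$ and $\pi$ through the common trajectory kernel) is sound, and the only nontrivial input --- that the multiplier in the geometric-ergodicity bound can be taken $\pi$-integrable --- is correctly identified and correctly discharged through the equivalence with a Foster--Lyapunov drift condition. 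In the context of this paper that last worry essentially evaporates: in Appendices~\ref{veri:VAR} and~\ref{veri:ARCH} geometric ergodicity is itself \emph{established} by verifying a drift condition (Proposition~1 of \cite{liebscher2005towards}), so a $\pi$-integrable drift function $V$ with $\|P^n(x,\cdot)-\pi\|_{\mathrm{TV}}\le C V(x)\rho^n$ is available by construction rather than by appeal to the Meyn--Tweedie equivalences. The trade-off is the usual one: your route makes the mechanism (and the implicit irreducibility/aperiodicity hypotheses under which ``geometric ergodicity'' and the drift equivalence are stated) visible, while the paper's citation keeps the appendix short; either is acceptable here since the examples only need the existence of some geometric $\beta$-mixing rate.
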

 
By the fact that $\beta$-mixing implies $\alpha$-mixing (see Section \ref{sec:mixingintro}) for a random process, we know that $ \alpha $-mixing coefficients decay geometrically and hence is summable. So, Assumption~\ref{as:alphaMix} holds. 

\paragraph{Subweibull VAR}

To show that $({Z}_t)   $ satisfies Assumptions~\ref{as:stat} and~\ref{assump:betamixing}, we establish that $({Z}_t)   $ is geometrically ergodic. To show the latter, we use Propositions 1 and 2 in~\cite{liebscher2005towards} together with the equivalence between $ (Z_t) $ and $ (\tilde{Z}_t) $ and Fact \ref{fact:mixingEquiv}. 
\edit{
It will be useful to note the fact that spectral radius $ r(\tilde{A}) <1 $ implies that $ \exists k \in \mathbf{Z} $ such that $ \vertiii{\tilde{A}^k } <1$.
}

To apply Proposition 1 in \cite{liebscher2005towards}, we check the three conditions one by one. 
Condition (i) is immediate with $ m=1,\, E=\R^p$, and $\mu$ is the Lebesgue measure. For condition (ii), we set $ E=\R^p$, $\mu$ to be the Lebesgue measure, and $\bar{m}= \ceil{\inf_{\vc{u} \in C, \vc{v} \in A}\vertii{u-v}_2} $ the minimum ``distance" between the sets $ C   $ and $ A $. Because $ C  $ is bounded and $ A $ Borel, $ \bar{m} $ is finite. Lastly, for condition (iii), we again let  $ E=\R^p$, $\mu$ to be the Lebesgue measure, and now the function $ Q(\cdot)= \vertii{\cdot} $ and then set 
\edit{
$ K_c=\{x \in \R^p: \vertii{x}\le \frac{4C_{A\epsilon}}{c} \} $ where $ c=1-\vertiii{\tilde{A}^k} $ and $ C_{A\epsilon} := \sum_{i=1}^{k+1} \vertiii{\tilde{A}^{k-i}} \E \vertii{\epsilon_{t-k+1}} $. Then, since spectral radius $ r(\tilde{A}) <1 $,}

\begin{itemize}
\item 
\edit{
For all $ z \in E \backslash K_c  $; i.e. $ z  $ such that $ \vertii{z} > \frac{4C_{A\epsilon}}{c} $,
\begin{align*}
\E \bbra{\vertii{\,\tilde{Z}_{t+1} }\,\middle| \tilde{Z}_t=z} &\le \vertiii{\tilde{A}^k} \vertii{z} + \sum_{i=1}^{k+1} \vertiii{\tilde{A}^{k-i}} \E \vertii{\epsilon_{t-k+1}} \\
&\equiv \bpar{1-c}\vertii{z} + C_{A\epsilon} \\
&< \bpar{1-\frac{c}{2}}\vertii{z} -C_{A\epsilon} .\; 
\end{align*} 
}
\item
\edit{
For all $ z \in K_c $,
\begin{align*}
\E \bbra{\vertii{\,\tilde{Z}_{t+1} }\,\middle| \tilde{Z}_t=z} <  \vertiii{\tilde{{A}}} \vertii{z} + C_{A\epsilon}  
\le \frac{4C_{A\epsilon} (1-c)}{c} + C_{A\epsilon}
\end{align*}
}
\item
\edit{
For all $ z \in K_c $,
\begin{align*}
0 \le \vertii{z } \le \frac{4C_{A\epsilon}}{c}.
\end{align*}
}
\end{itemize}

Now, by Proposition 1 in~\cite{liebscher2005towards}, $ (\tilde{Z}_t) $ is geometrically ergodic; hence $ (\tilde{Z}_t) $ will be stationary. Once it reaches stationarity, by Proposition 2 in the same paper, the sequence will be $ \beta $-mixing with geometrically decaying mixing coefficients. 
Therefore, Assumptions~\ref{as:stat} and~\ref{assump:betamixing} hold.

We are left with checking Assumption~\ref{assump:subW}. Let $ \gamma $ be the subweibull parameter associated with $ (\mathcal{E}_t)$. 


Assume that the spectral radius of $ A $ is smaller than $ 1 $; i.e. $ r(A) < 1 $. This is an equivalent notion of stability of VAR process. 
By the definition of the spectral radius,
$$ \lim\limits_{m \to \infty}  \vertiii{A^m}^{1/m} = r(A)<1$$ In other words, there exists a positive integer $ k <\infty $ such that $  \vertiii{\tilde{A}^k}<1 $.
By the recursive nature of the time series,
\begin{align} \label{subweibull-norm-app}
\norm[Z_t]{\psi_\gamma} \le \vertiii{\tilde{A}^k} \norm[Z_{t-1}]{\psi_\gamma} +  \sum_{i=1}^{k+1}\vertiii{\tilde{A}^{k-i}}\norm[\mathcal{E}_{t-k+i}]{\psi_\gamma}
\end{align}
To simplify notation, let $ C_i := \vertiii{\tilde{A}^{k-i}} $. Using stationarity, we have the following
\begin{align*}
\norm[Z_t]{\psi_\gamma} \le \frac{\swnorm{\epsilon_t}}{1-\vertiii{\tilde{A}^k}} \bpar{\sum_{i=1}^{k}c_i} < \infty
\end{align*}
The last inequality follows because $ C_i<\infty, \, \forall i=1, \cdots, k$. 
Thus, the sequence $ (Z_t) $ satisfies Assumption~\ref{assump:subW}.
\subsection{VAR with Misspecification}\label{veri:misVAR}


%
%

Assumption~\ref{as:0mean} is immediate from model definitions. By the same arguments as in Appendix \ref{veri:VAR}, $ (Z_t,\Xi_t) $ are stationary and so is the sub-process $ (Z_t) $; Assumption~\ref{as:stat} holds. Again,   $ (Z_t,\Xi_t) $ 
satisfy  Assumption \ref{as:alphaMix} (for Example \ref{ex:misVAR}) and Assumption~\ref{assump:betamixing} (for Example \ref{ex:sGmisVAR}) according to Appendix \ref{veri:VAR}. By Fact \ref{fact:mixingEquiv}, we have the same Assumptions hold for the respective sub-processes $ (Z_t) $ in the respective cases. 

To show that $ (\bstar)'=\mt{A}_{Z Z }+\mt{A}_{Z \Xi } \Sigma_{\Xi Z }(0)(\Sigma_Z (0))^{-1} $, consider the following arguments. 
By Assumption~\ref{as:stat}, we have
the auto-covariance matrix of the whole system $ (Z_t,\Xi_t) $ as

\[
\Sigma_{(Z,\,\Xi)}
=
\begin{bmatrix}
\Sigma_X(0) &  \Sigma_{X\Xi}(0)\\ 
\Sigma_{\Xi X}(0) & \Sigma_\Xi (0)
\end{bmatrix} 
\]
Recall our $ \bstar $ definition from Eq.~\eqref{eqn:bstar}
$$
\bstar := \argmin_{\mt{B} \in \R ^{p\times p}} \E \bpar{  
\vertii{
Z _t - \mt{B}' Z _{t-1}
}^2_2
}\\
$$
Taking derivatives and setting to zero, we obtain
\begin{equation} \label{eq:tilA}
(\bstar)' = \Sigma_Z (-1) (\Sigma_Z )^{-1}
\end{equation}
Note that 
\begin{align*}
\Sigma_Z (-1)	&= \Sigma_{(Z ,\,\Xi )}(-1)[1:p_1, 1:p_1] \\
				&= \E \bpar{\mt{A}_{Z Z }Z _{t-1} + \mt{A}_{Z \Xi }\Xi _{t-1} + \mathcal{E}_{Z ,t-1}} Z_{t-1}' \\
				&=\E\bpar{\mt{A}_{Z Z }Z_{t-1} Z_{t-1}' +  \mt{A}_{Z \Xi }\Xi _{t-1}  Z _{t-1}' +\mathcal{E}_{Z ,t-1}  Z_{t-1}' }\\
				&= \mt{A}_{Z Z }\Sigma_Z (0) +\mt{A}_{Z \Xi } \Sigma_{\Xi Z }(0)
\end{align*}
by Assumption \ref{as:stat} and the fact that the innovations are iid.

Naturally, 
$$
(\bstar)'
= \mt{A}_{Z Z }\Sigma_Z (0)(\Sigma_Z (0))^{-1} +\mt{A}_{Z \Xi } \Sigma_{\Xi Z }(0)(\Sigma_Z (0))^{-1}
=\mt{A}_{Z Z }+\mt{A}_{Z \Xi } \Sigma_{\Xi Z }(0)(\Sigma_Z (0))^{-1}
$$

\begin{rem}
Notice that $\mt{A}_{Z \Xi }   $ is a column vector and suppose it is $ 1 $-sparse, and $ \mt{A}_{Z Z } $ is $ p $-sparse, then $ \bstar$ is at most $  2p$-sparse. So Assumption~\ref{as:spars} can be built in by model construction. 
\end{rem}

\begin{rem}
We gave an explicit model here where the left out variable $ \Xi  $ was univariate. That was only for convenience. In fact, whenever the set of left-out variables $ \Xi  $ affect only a small  set of variables $ \Xi  $ in the retained system $ Z  $, the matrix $\bstar$ is guaranteed to be sparse. To see that, suppose $ \Xi \in \R^q $ and $ \mt{A}_{Z \Xi } $ has at most $s_0$ non-zero rows (and let $ \mt{A}_{Z Z } $ to be $ s $-sparse as always), then $\bstar$ is at most $ (s_0 p +s )$-sparse.
\end{rem}
Lastly, for Example~\ref{ex:misVAR}, the sub-process $ (Z_t) $ is Gaussian because is obtained from a linear transformation of $ (Z_t,\Xi_t) $ which is Gaussian; we have Assumption~\ref{as:gauss}. For Example~\ref{ex:sGmisVAR}, note that $ Z_t = \mt{M} (Z_t, {\Xi}_t )$ where $ \mt{M}= [\mt{I}_p,0;\vc{0}',0] $ is a sub-setting matrix that selects the first $ p $ entries of a $ (p+1) $-dimensional vector. Hence, the fact that $Z_t $ is subweibull follows from the same arguments in Appendix~\ref{veri:VAR} pertaining to establishing the subweibull property in conjunction with applying Lemma~\ref{result:subweibulllinear} on $ Z_t = \mt{M} (Z_t, {\Xi}_t )$; so, Assumption~\ref{assump:subW} holds.  
\begin{rem}
Any VAR($d$) process has an equivalent VAR(1) representation~\cite{lutkepohl2005new}. Our results extend to any VAR($d$) processes. 
\end{rem}


\subsection{ARCH}\label{veri:ARCH}

\paragraph{Verifying the Assumptions}

To show that Assumption~\ref{assump:betamixing} hold for a process defined by Eq. \eqref{eq:ARCH} we leverage on Theorem 2 from~\cite{liebscher2005towards}. 
Note that the original ARCH model in~\cite{liebscher2005towards} assumes the innovations to have positive support everywhere. However, this is just a convenient assumption to establish the first two conditions in Proposition 1 (on which proof of Theorem 2 relies) from the same paper. ARCH model with innovations from  more general distributions (e.g. uniform) also satisfies the first two conditions of Proposition 1 by the same arguments  in the \emph{Subweibull} paragraph of Appendix~\ref{veri:VAR}. 

Theorem 2 tells us that for our ARCH model, if it satisfies the following conditions, it is guaranteed to be 
absolutely regular with geometrically decaying $ \beta $-coefficients.
\begin{itemize}
\item $\mathcal{E}_t$ has positive density everywhere on $\R^p$ and has identity covariance by construction.
\item $\Sigma(\vc{z}) = o(\vertii{\vc{z}})$ because $m \in (0,1)$.
\item $\vertiii{ \Sigma(\vc{z})^{-1} } \le 1/(ac)$, $|\mathrm{det} \left( \Sigma(\vc{z}) \right) | \le bc$
\item \edit{ $r(\mt{A}) < 1$}
\end{itemize}
So, Assumption~\ref{assump:betamixing} is valid here. We check other assumptions next.

Mean $ 0 $ is immediate, so we have Assumption~\ref{as:0mean}.
When the Markov chain did not start from a stationary distribution, geometric ergodicity implies that the sequence is approaching the stationary distribution exponentially fast. So, after a burning period, we will have Assumption \ref{as:stat} approximately valid here. 

The subweibull constant of $\Sigma(Z_{t-1})\mathcal{E}_t$ given $ Z_{t-1}=\vc{z} $ is bounded as follows:
for every $ \vc{z} $,

\begin{align*}
\swnorm{ \Sigma(\vc{z})\mathcal{E}_t } &\le \vertiii{ \Sigma(\vc{z}) } \swnorm{\mathcal{E}_t} &&\text{by Lemma~\ref{result:subweibulllinear}}\\
&\le K_e cb=: K_E
\end{align*}
where $ K_e := \sup\limits_{t}  \swnorm{\mathcal{E}_t}  $. 
\edit{
By the same arugments as in Equation~\ref{subweibull-norm-app}, we have that
Assumption~\ref{assump:subW} holds.
}

We will show below that $\bstar=\mt{A}'$. Hence, sparsity (Assumption~\ref{as:spars}) can be built in when we construct our model \ref{eq:ARCH}. 

Recall Eq. \ref{eq:tilA} from Appendix~\ref{veri:misVAR} that 
$$
\bstar= \Sigma_Z(-1) (\Sigma_Z)^{-1}
$$
Now,
\begin{align*}
\Sigma_Z(-1)&=\E Z_t Z_{t-1}'&&\text{by stationarity} \\
			&= \E \bpar{  \mt{A} Z_{t-1} +\Sigma(Z_{t-1}) \mathcal{E}_t }Z_{t-1}' &&\text{Eq.~\eqref{eq:ARCH}}\\
			&=\mt{A} \E Z_{t-1}Z_{t-1}' + \E \Sigma(Z_{t-1}) \mathcal{E}_t Z_{t-1 }' \\
			&= \mt{A}\Sigma_Z  + \E[ c\,\clip{\vertii{Z_{t-1}}^{m}}{a}{b}  \mathcal{E}_t Z_{t-1 }'] \\
			&= \mt{A}\Sigma_Z  +  \E[ c\mathcal{E}_t Z_{t-1 }' \clip{\vertii{Z_{t-1}}^{m}}{a}{b}]\\
			&= \mt{A}\Sigma_Z  +   c\E\bbra{ \mathcal{E}_t} \E\bbra{ Z_{t-1 }'  \clip{\vertii{Z_{t-1}}^{m}}{a}{b} } &&\text{i.i.d. innovations}\\
			&=\mt{A}\Sigma_Z  &&\text{$ \mathcal{E}_t $ mean $ 0 $} ,
\end{align*}
where $\clip{x}{a}{b} := \min\{\max\{x,a\},b\}$ for $b > a$.

Since $ \Sigma_Z $ is invertible, we have $ (\bstar)'= \Sigma_Z(-1) (\Sigma_Z)^{-1} =\mt{A} $.

\end{document}